\newtheorem{theorem}{Theorem}[section]
\newtheorem{lemma}[theorem]{Lemma}
\newtheorem{definition}[theorem]{Definition}
\newtheorem{proposition}[theorem]{Proposition}
\newtheorem{remark}[theorem]{Remark}
\newcommand{\Sd}{\mathbb{S}^2}
\newcommand{\e}{\varepsilon}
\newcommand{\R}{\mathbb{R}}
\newcommand{\D}{\mathbb{D}}
\newcommand{\oD}{\overline{\mathbb{D}}}
\newcommand{\weakto}{\rightharpoonup}
\renewcommand{\b }{\beta }
\newcommand{\G}{\Gamma}
\renewcommand{\S}{\Sigma}
\def\bbm[#1]{\mbox{\boldmath $#1$}}
\newcommand{\beq }{\begin{equation}}
\newcommand{\eeq }{\end{equation}}
\newcommand{\tK}{\tilde{K}}
\def\sideremark#1{\ifvmode\leavevmode\fi\vadjust{\vbox to0pt{\vss
 \hbox to 0pt{\hskip\hsize\hskip1em
 \vbox{\hsize3cm\tiny\raggedright\pretolerance10000
  \noindent #1\hfill}\hss}\vbox to8pt{\vfil}\vss}}}%
\begin{document}

\title[Conformal metrics of the disk with prescribed curvatures]{Conformal metrics of the disk with prescribed Gaussian and geodesic curvatures}

\author{David Ruiz}
  \address{David Ruiz \\
    IMAG, Universidad de Granada\\
    Departamento de An\'alisis Matem\'atico\\
    Campus Fuentenueva\\
    18071 Granada, Spain}
  \email{daruiz@ugr.es}

\thanks{D. R. has been supported by the FEDER-MINECO Grant PGC2018-096422-B-I00 and by J. Andalucia (FQM-116). He also acknowledges financial support from the Spanish Ministry of
		Science and Innovation (MICINN), through the \emph{IMAG-Maria de Maeztu} grant
		CEX2020-001105-M/AEI/10.13039/501100011033.}


\keywords{Prescribed curvature problem, conformal metric, Leray-Schauder degree.}

\subjclass[2000]{35J20, 58J32, 53A30, 35B44}

\begin{abstract}
{This paper is concerned with the existence of conformal metrics of the disk with prescribed Gaussian and geodesic curvatures. Being more specific, given nonnegative smooth functions $K: \oD \to \R$ and $h: \partial \D \to \R$, we consider the problem of finding a conformal metric realizing $K$ and $h$ as Gaussian and geodesic curvatures, respectively. This is the natural analogue of the classical Nirenberg problem posed on the disk. As we shall see, both curvatures play a role in the existence of solutions. Indeed we are able to give existence results under conditions that involve $K$ and $H$, where $H$ denotes the harmonic extension of $h$. The proof is based on the computation of the Leray-Schauder degree in a compact setting.}
\end{abstract}

\maketitle

\section{Introduction}
\setcounter{equation}{0}

Let $(\S,g)$ denote a compact surface $\S$ equipped with a certain metric $g$. The classical Kazdan-Warner problem (see \cite{ber, KW})  consists of determining whether a prescribed function $K$ is the Gaussian curvature of a new metric $\tilde{g}$ conformal to $g$. We point out that if $K$ is a constant function of the same sign of $\chi(\S)$, then this problem is solvable via the Uniformization Theorem.

\medskip If $\tilde{g}=e^u g$ and $K$, $K_g$ are respectively the Gaussian curvatures relative to these metrics, then the following relation holds:
\[
-\Delta_g u + 2K_g(x) = 2K(x)e^u \quad\mbox{in}\quad \S.
\]
Hence the Kazdan-Warner problem reduces to solving this equation. This problem has been completely solved in the case of the torus (\cite{KW}) and the projective plane (\cite{Moser}).

%


\medskip 

The case of the standard sphere $\mathbb{S}^2$ is particularly delicate and receives the name of Nirenberg problem. As commented above, this is equivalent to solve:

\begin{equation}\label{eq:Nirenberg}
-\Delta u + 2 = 2K(x)e^{u} \quad\mbox{in}\quad \mathbb{S}^2.
\end{equation}

This question has been addressed by a large number of papers (see for instance \cite{Aubin, ChGYg, ChYgActa, ChYg, chen-ding, chen-ding2, CL-CPAM, espinar,  Ji, hamza, Han, han-li, KW, Moser, struwe, xu-yang}), some of whom are commented below. For instance, an obstruction to existence was found by Kazdan and Warner in \cite{KW}. On the other hand, if $K$ has antipodal symmetry (i.e. $K(-x) = K(x)$) and it is somewhere positive then there exists a solution, see \cite{Moser}. Other results under symmetry assumptions are given in \cite{chen-ding}. In \cite{struwe}  solutions of \eqref{eq:Nirenberg} have been found as the asymptotic limit of a suitable parabolic flow.

%

A related and significant issue in this kind of problems is the study of compactness of solutions, starting from \cite{breme, li-sha}. Being more specific, let us consider a sequence of solutions of the problem:
\[
-\Delta u_n + 2 = 2K_n(x)e^{u_n} \quad\mbox{in}\quad \mathbb{S}^2,
\]
where $K_n$ converges in $C^2$ sense to a strictly positive function $K(x)$. Observe that if $K_n=1$, the problem is invariant by the group of conformal maps of the sphere, which is not compact. In the non-constant case, this invariance is lost but concentration of solutions may still occur. This is the so-called ``bubbling phenomena"; the solutions concentrate all their mass around a certain singular point. 

Not only the asymptotic behavior of the sequence is relevant, but also the location of the point of concentration. It has been shown (\cite{ChYg2, ChGYg}) that if the sequence $u_n$ is blowing-up, then the point of concentration satisfies:
\begin{equation} \label{cond-nir}  \nabla K(p)=0,\ \Delta K(p)=0.  \end{equation}

Regarding existence of solutions, the first general result (without symmetry requirements) was given for the first time in \cite{ChYgActa}, and had a great impact in the research community.

\begin{theorem}[\cite{ChYgActa}] \label{teoA} Let $K: \Sd \to \R^+$ be a $C^2$ function such that $|\nabla K(p)| + |\Delta K(p)| \neq 0$ for all $p \in \Sd$. Assume also the critical points of $K$ are nondegenerate. Then there exists a solution of problem \eqref{eq:Nirenberg} provided that:
	 
\begin{equation} \label{ChYg} 1- \sum_{x \in S_{-}} (-1)^{ind(x)}  \neq 0. \end{equation}
	
	Here $S_{-}=\{x \in \Sd: \ \nabla K(x)=0, \ \Delta K(x)<0\}$ and $ind(x)$ denotes its Morse index.
	
\end{theorem}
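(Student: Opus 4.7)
The plan is a Leray-Schauder degree argument in the spirit of \cite{ChYgActa}. I would first recast \eqref{eq:Nirenberg} as a compact fixed-point equation $u = T_K(u)$ on a Banach space such as $\{u \in C^{2,\alpha}(\Sd) : \int_{\Sd} u\, dv_g = 0\}$ (normalizing out constants), so that solutions of the Nirenberg problem correspond to zeros of $F_K := \Id - T_K$. The goal then reduces to establishing $\deg(F_K, B_R, 0) \neq 0$ for $R$ large.

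The first step is to establish a priori bounds. By the blow-up analysis recalled in \eqref{cond-nir} (from \cite{ChYg2, ChGYg}), any concentration point $p$ of a blowing-up sequence of solutions to \eqref{eq:Nirenberg} must satisfy $\nabla K(p) = 0$ and $\Delta K(p) = 0$ simultaneously. The standing hypothesis $|\nabla K| + |\Delta K| \neq 0$ rules this out, so the solution set is compact in $C^{2,\alpha}$ and $\deg(F_K, B_R, 0)$ is well defined for all sufficiently large $R$.

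To compute the degree I would use a Lyapunov-Schmidt finite-dimensional reduction along the three-dimensional noncompact conformal group $\operatorname{Conf}(\Sd)$. This produces an approximate bubbling profile $u_{x,\lambda}$ concentrating at any $x \in \Sd$ with concentration parameter $\lambda > 0$; as $\lambda \to \infty$ the reduced Euler-Lagrange system admits an asymptotic expansion whose leading terms are governed by $\nabla K(x)$ in the tangential direction and by $\Delta K(x)/\lambda$ in the scaling direction. Because of the sign in the scaling equation, only $x \in S_-$ (those with $\Delta K(x) < 0$) can support a solution of the reduced system, and by the nondegeneracy hypothesis each such $x$ contributes $-(-1)^{\text{ind}(x)}$ via a local Poincar\'e-Hopf count on the reduced map. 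Combined with the $+1$ contribution from the non-bubbling branch (matching the trivial solution of the reference problem $K \equiv 1$), this yields
\[
\deg(F_K, B_R, 0) = 1 - \sum_{x \in S_-}(-1)^{\text{ind}(x)},
\]
which is nonzero by \eqref{ChYg}; existence of a solution then follows from the standard existence principle of degree theory.

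The main obstacle is the finite-dimensional reduction itself. One must construct sufficiently accurate bubbling profiles parametrized by $\operatorname{Conf}(\Sd)$, invert the linearized operator on the orthogonal complement of the tangent space to the conformal orbit, and expand the reduced map to the precise order where $\nabla K$ and $\Delta K$ appear as the leading corrections. Identifying the resulting local sign as $(-1)^{\text{ind}(x)+1}$ at each $x \in S_-$, and then gluing these local contributions with the non-bubbling branch into a single global degree, is the technical heart of the Chang-Yang argument.
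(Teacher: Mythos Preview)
The paper does not prove Theorem~\ref{teoA}; it is quoted from \cite{ChYgActa} as background for the Nirenberg problem, and the only comment the paper makes about its proof is that it ``uses a quite complicated min-max technique and Morse theory''. So there is no proof in the paper to compare your attempt against.

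That said, your outline is not the Chang--Yang proof. What you sketch is a Leray--Schauder degree computation via a Lyapunov--Schmidt reduction along the conformal orbit, which is the strategy of \cite{ChGYg, ChYg2} and especially \cite{Ji} (Theorem~\ref{teoB} in the paper). The original argument in \cite{ChYgActa} is variational: it builds min-max levels and uses Morse-theoretic index counting to detect a critical point, rather than computing a topological degree directly. Your approach has the advantage of yielding the degree formula $1 - \sum_{x\in S_-}(-1)^{\mathrm{ind}(x)}$ as the actual Leray--Schauder degree (which is exactly what \cite{Ji} does, cf.\ the remark after Theorem~\ref{teoB}), and in principle it does not require nondegeneracy of the critical points of $K$; the min-max approach, by contrast, needs the Morse assumption but avoids the delicate asymptotic expansions of the reduced map. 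As a proof sketch your plan is reasonable, though the step you flag as the ``main obstacle'' --- inverting the linearization transversally to the conformal orbit and extracting the sign $(-1)^{\mathrm{ind}(x)+1}$ from the reduced problem --- is indeed where essentially all the work lies, and your proposal does not yet indicate how you would carry it out.
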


The proof uses a quite complicated min-max technique and Morse theory, see also \cite{ChL, Han}. As a drawback, it requires the function $K$ to be a Morse function. 

A somewhat different approach was given in \cite{Ji}, more in the spirit of \cite{ChGYg, ChYg2}, where the nondegeneracy condition on the critical points of $K$ is dropped. In order to state this result, let us define:

\begin{equation} \label{defG}G: \Sd \to \R^3,\ G(x)= \nabla K(x) + \Delta K(x) x. \end{equation}

Observe that condition \eqref{cond-nir} is not satisfied for any $p \in \Sd$ if and only if $0 \notin G(\S^2)$, and hence in such case we have compactness of solutions.

\begin{theorem}[\cite{Ji}] \label{teoB}Let $K: \Sd \to \R^+$ be a $C^2$ function such that $|\nabla K(p)| + |\Delta K(p)| \neq 0$ for all $p \in \Sd$. Then there exists a solution to problem \eqref{eq:Nirenberg} provided that:
	
\[  d_{B} (G, \Sd, 0) \neq 0. \]
	
where $d_B$ denotes the usual Brouwer degree. 
	
	\end{theorem}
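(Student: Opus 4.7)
The plan is to apply Leray--Schauder degree theory to a compact reformulation of \eqref{eq:Nirenberg} and to identify the resulting degree with $d_{B}(G,\Sd,0)$. Concretely, I would rewrite \eqref{eq:Nirenberg} as a fixed-point equation on $X=\{v\in C^{2,\alpha}(\Sd):\int_{\Sd}v=0\}$: integration of \eqref{eq:Nirenberg} over $\Sd$ determines the mean value $\bar u$ of a solution $u=\bar u+v$ as an explicit function of $v$, so that $u$ solves \eqref{eq:Nirenberg} if and only if $v=\Psi_{K}(v)$ for a compact map $\Psi_{K}:X\to X$ obtained by inverting $-\Delta$ on $X$. Existence then follows from $d_{LS}(\Id-\Psi_{K},B_{R},0)\neq 0$ for some large ball $B_{R}\subset X$.

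Next, I would establish uniform a priori bounds via the Chen--Yang/Chang--Yang blow-up analysis: by \eqref{cond-nir}, any blow-up point $p\in\Sd$ of a sequence of solutions must satisfy $\nabla K(p)=0$ and $\Delta K(p)=0$. Since $\nabla K(p)$ is tangent to $\Sd$ at $p$, the normal component of $G(p)$ is $\Delta K(p)$ and the tangential component is $\nabla K(p)$; thus $G(p)=0$ is equivalent to both vanishing conditions. The hypothesis $|\nabla K|+|\Delta K|\neq 0$---equivalent to $0\notin G(\Sd)$, which is exactly the condition rendering $d_{B}(G,\Sd,0)$ well defined---therefore excludes blow-up and yields uniform $C^{2,\alpha}$ bounds. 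In particular, $d_{LS}(\Id-\Psi_{K},B_{R},0)$ is well defined for $R$ large and invariant under homotopies preserving this non-degeneracy.

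The core step is then to identify this degree, up to sign, with $d_{B}(G,\Sd,0)$. Following \cite{ChGYg, ChYg2}, I would perform a finite-dimensional reduction on the manifold of standard bubbles $\{w_{p,\lambda}\}_{(p,\lambda)\in\Sd\times(\lambda_{0},\infty)}$, which by the compactness analysis are the only possible asymptotic profiles. The strategy is to deform $K$ through a homotopy $K_{t}$ to a reference function (for instance a suitable perturbation of a constant, concentrated near a generic point) keeping $0\notin G_{K_{t}}(\Sd)$ throughout, and to verify that for the reduced problem, solutions concentrated at scale $\lambda$ near $p\in\Sd$ correspond to zeros of a finite-dimensional vector field $\Phi_{\lambda}:\Sd\to\R^{3}$. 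An asymptotic expansion---parallel to the one underpinning the Kazdan--Warner identity behind \eqref{cond-nir}---identifies $\Phi_{\lambda}$ to leading order as a positive multiple of $G$, giving $d_{LS}(\Id-\Psi_{K},B_{R},0)=\pm\, d_{B}(G,\Sd,0)$ and hence a solution of \eqref{eq:Nirenberg}.

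The principal obstacle is this final identification: carrying out the bubble asymptotics with enough precision and uniformly in $(p,\lambda)$ to recover $G$ as the leading vector field, and closing the reduction rigorously so as to convert the infinite-dimensional Leray--Schauder degree into the Brouwer degree of $G$ on $\Sd$. The other steps---the fixed-point reformulation and the compactness estimates---are fairly standard once the blow-up results of \cite{ChYg, ChYg2, ChGYg} are in hand.
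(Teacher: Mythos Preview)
The paper does not prove Theorem~\ref{teoB}; it is quoted from \cite{Ji} as background, with only the one-line remark that ``the proof is based on the computation of the Leray-Schauder degree of the problem.'' So there is no proof in the paper to compare your proposal against in detail.

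That said, the paper's introduction and Section~3 describe Ji's actual strategy, and it differs from yours at the reduction step. Ji does not perform a Lyapunov--Schmidt reduction on the bubble manifold $\{w_{p,\lambda}\}$. Instead, following \cite{Aubin, ChGYg, ChYg2}, one fixes a center of mass $a\in\Sd$ and minimizes the mean-field functional under the constraint $\int e^{u}x/\int e^{u}=a$; an improved Moser--Trudinger inequality makes this constrained problem coercive, producing a minimizer $u_{a}$ solving the equation with an extra Lagrange multiplier $\mu(a)\in\R^{3}$. The finite-dimensional object is then the map $a\mapsto\mu(a)$, and the degree identification comes from analyzing $\mu(a)$ as $a$ ranges over $\Sd$ (Ji relies on \cite{preJi} at this point; the present paper, in its disk analogue, instead lets $a$ approach the boundary and reads off $\nabla K$ from the blow-up). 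Your fixed-point reformulation and compactness step are the same as in this framework, but your core reduction is a genuinely different route: you parametrize approximate solutions by bubble data $(p,\lambda)$ and expand, whereas Ji parametrizes exact constrained minimizers by their center of mass and tracks the multiplier.

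Your outline is plausible, and the obstacle you flag---controlling the reduced vector field uniformly in $(p,\lambda)$ with enough precision to recover $G$---is real. One point to watch: the homotopy you propose, deforming $K$ to a reference function while keeping $0\notin G_{K_t}(\Sd)$, is not automatic, since $G$ depends on second derivatives of $K$ and a generic path need not avoid $0$. The constrained-minimization route sidesteps this by keeping $K$ fixed and instead deforming toward the constant-curvature problem, where the minimizers are explicit.
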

The proof is based on the computation of the Leray-Schauder degree of the problem. We point out that if $K$ is a Morse function, the Brouwer degree of $G$ coincides precisely with the left hand side of formula \eqref{ChYg}, see \cite[Corollary 1.2]{Ji}.

\medskip

If $\S$ is a surface with boundary, one usually imposes boundary conditions. A natural geometric problem consists in prescribing also the geodesic curvature of the boundary; in this way we are led to the problem:
\begin{equation} \label{gg0}
\left\{\begin{array}{ll}
-\Delta u +2 K_g(x) = 2 K(x) e^u  & \text{ in } \S, \\
\frac{\partial u}{\partial \nu} +2 h_g(x) = 2h(x)e^{u/2}  &\text{ on } \partial\S,\end{array}\right.
\end{equation}
where $\nu$ is the outward normal vector to $\partial \S$. Here $h_g$ is the  geodesic curvature of $\partial \S$ under the metric $g$, and $h$ is the geodesic curvature to be prescribed for the new metric $\tilde{g}=e^u g$. 

\medskip

In the literature there are some concerning problem \eqref{gg0}.  The case of constant $K$, $h$ has been considered in \cite{brendle}, where the author used a parabolic flow to obtain solutions in the limit. Some classification results for the half-plane are also available in \cite{galvez, li-zhu, Zhang}. The case of nonconstant curvatures was addressed for the first time in \cite{cherrier}, but the results there are partial and in some of them an unknown Lagrange multiplier appears in the equation. In \cite{lsmr} the case of surfaces topologically different from the disk is studied when $K<0$. In that paper a new type of blow-up phenomenon appears, where length and area diverge. 

\medskip

The case of the disk $\S = \D$ can be seen as a natural generalization of the Nirenberg problem. Indeed, also here the effect of the noncompact group of conformal maps of the disk plays a fundamental role. The problem becomes:
\begin{equation} \label{gg}
\left\{\begin{array}{ll}
\displaystyle{-\Delta u = 2K(x)e^u } \qquad & \text{ in }\D, \\
\displaystyle{\frac{\partial u}{\partial \nu} +2 = 2h(x)e^{u/2}} \qquad  &\text{ on } \partial \D.
\end{array}\right.
\end{equation}

Integrating \eqref{gg} and applying the Gauss-Bonnet Theorem, one obtains
\[ 
\int_{\D}K(x) e^u \, +  \int_{\partial \D}h(x) e^{u/2} \, =  2\pi,
\]
which implies that either $K$ or $h$ must be positive somewhere.  

Some works have also addressed problem \eqref{gg}. For example, the case $h=0$ and $\partial_{\nu}K=0$ has been treated in \cite{ChYg} passing to a Nirenberg problem via reflection (see also \cite{guo-hu}). The case $K=0$ has attracted more attention, see \cite{DLMR, GuoLiu, kcchang, LiLiu, LiHu}. More recently, in \cite{manuela} a flow approach is used to give existence solutions.

Up to our knowledge, very few existence results are availabe on equation \eqref{gg} when both curvatures are non-constant. One of them is \cite{CruzRuiz}, where an existence result for \eqref{gg} with positive symmetric curvatures is given, in the spirit of the aforementioned result of Moser (\cite{Moser}).

Regarding the blow-up analysis of solutions, the case $K=0$ has been covered in \cite{GuoLiu}, whereas in \cite{DLMR} a new approach is given under mild conditions on the sequence of functions $h_n$. For the general case, a rather complete result has been recently given in \cite{jlmr}. We state it in a version which is adequate for our purposes (see Theorem 1.1 and Remark 1.2 in \cite{jlmr}).

\medskip


%
%
%


\begin{theorem}[\cite{jlmr}] \label{blowup} Let $u_n$ be a sequence of solutions of the problem
	\[
	\left\{\begin{array}{ll}
	\displaystyle{-\Delta u_n = 2K_n(x)e^{u_n} } \qquad & \text{in $\D$},\\
	\displaystyle{\frac{\partial u_n}{\partial \nu} +2 = 2h_n(x)e^{u_n/2}} \qquad  &\text{on $\partial \D $},
	\end{array}\right.
	\]
where $0 \leq K_n \to K$ in $C^2(\oD)$ and $0\leq h_n \to h$ in $C^2(\partial \D)$ as $n\to +\infty$. We shall assume also that either $h$ or $K$ is strictly positive. If $u_n$ is a blowing-up sequence, namely, $\sup \{ u_n \} \to +\infty$, then there exists a unique point $p \in \partial \D$ such that 
	\begin{enumerate}
	\item[i)]	
	$
u_n\to -\infty \qquad \mbox{ locally uniform in } \oD\setminus \{p\}, 
$

\item[ii)]
\[
h_ne^{u_n/2} \weakto \b \delta_p  \quad \mbox{ and } \quad K_ne^{u_n} \weakto (2\pi -\b) \delta_p,
\]
in the sense of measures, where
\[
\displaystyle{\beta:=2\pi \frac{h(p)}{\sqrt{h^2(p)+K(p)}} }.
\]

		\item[iii)] The point $p$ satisfies 
		
		 \beq \label{cond-nir2} \nabla \Phi(p)=0, \eeq
where 
		
\begin{equation}\label{PHI}
\Phi(x)= H(x) + \sqrt{H(x)^2 + K(x)},
\end{equation}
and $H$ denotes the harmonic extension of $h$, that is, 

\begin{equation*} 
\left\{\begin{array}{ll}
\displaystyle{\Delta H = 0} \qquad & \text{in $\D$},\\
H(x) = h(x) \qquad  &\text{on $\partial \D $.}
\end{array}\right.
\end{equation*}
	\end{enumerate}
\end{theorem}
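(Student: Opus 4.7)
The plan is to run a standard boundary blow-up analysis, combined with a Pohozaev-type identity, tailored to the mixed Dirichlet/Neumann structure of \eqref{gg}.

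First I would extract the total mass. Integrating the equation and using Green's formula, together with the Gauss--Bonnet theorem (or simply $\int_\D(-\Delta u_n)=-\int_{\partial\D}\partial_\nu u_n$), gives
\[
\int_\D K_n e^{u_n} + \int_{\partial\D} h_n e^{u_n/2} = 2\pi
\]
for every $n$. This $2\pi$ budget is the key rigidity. I would then rule out interior concentration: rescaling around a putative interior blow-up point $x_n$ by the standard $\e_n = e^{-u_n(x_n)/2}$ produces, in the limit, an entire solution of $-\Delta U = 2K(x_0)e^U$ on $\R^2$ with finite mass, which by Chen--Li classification carries mass exactly $4\pi$. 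This exceeds the available $2\pi$ budget, so blow-up can only occur on $\partial\D$.

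Next, pick a sequence of near-maximizers $x_n \to p\in\partial\D$ and rescale by $\e_n=e^{-u_n(x_n)/2}$ after straightening $\partial\D$. Using the Neumann condition rewritten as $\partial_\nu u_n = 2h_n e^{u_n/2}-2$, the rescaled functions $v_n(y) = u_n(x_n+\e_n y)+2\log\e_n$ satisfy a problem on a half-plane that in the limit becomes
\[
\begin{cases}
-\Delta V = 2K(p)e^V & \text{in } \R^2_+,\\
\partial_\nu V = 2h(p)e^{V/2} & \text{on } \partial\R^2_+.
\end{cases}
\]
Invoking the classification results of \cite{galvez,li-zhu,Zhang}, the only such finite-mass solutions are the explicit ``half-bubbles'', whose total interior and boundary masses compute to $2\pi - \beta$ and $\beta$ respectively with $\beta = 2\pi h(p)/\sqrt{h(p)^2+K(p)}$. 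Since the single bubble already saturates the $2\pi$ budget, there is no room for further bubbles or residual mass, so the blow-up point is unique, $u_n\to-\infty$ locally uniformly away from $p$ (by standard elliptic estimates once the right-hand sides vanish in $L^1_{loc}(\oD\setminus\{p\})$), and the measures converge as stated in (ii).

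For (iii), the main obstacle, I would derive a Kazdan--Warner type identity. Let $X$ be a smooth vector field on $\oD$ tangent to $\partial\D$; multiplying the interior equation by $X\cdot\nabla u_n$ and integrating by parts yields, after using the Neumann boundary condition to replace $\partial_\nu u_n$,
\[
\int_\D \nabla K_n\cdot X\, e^{u_n}
+ 2\int_{\partial\D}\nabla_T h_n\cdot X\, e^{u_n/2}
= \text{lower order boundary terms}.
\]
Passing to the limit using (ii) gives a relation of the form $\nabla K(p)\cdot X(p) + 2\nabla_T h(p)\cdot X(p) \cdot(\beta/(2\pi-\beta)\text{-weight})=0$; choosing $X$ to run through a basis of tangent directions at $p$ and simplifying with $\beta=2\pi h/\sqrt{h^2+K}$ produces exactly $\nabla\Phi(p)=0$ with $\Phi=H+\sqrt{H^2+K}$ (the harmonic extension $H$ of $h$ enters because the normal derivative of $H$ at $p$ is what controls the ``radial'' part of the identity, while the tangential derivative of $h$ gives the ``tangential'' part; together they package into $\nabla\Phi$). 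The delicate point is the precise bookkeeping of the boundary contributions and the identification of the combined expression with $\nabla\Phi$, which is where the harmonic extension appears naturally as the unique function matching $h$ on $\partial\D$ and having the correct interior coupling with $K$.
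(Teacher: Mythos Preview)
This theorem is not proved in the present paper: it is quoted from \cite{jlmr} (see the sentence preceding the statement, ``a rather complete result has been recently given in \cite{jlmr}. We state it in a version which is adequate for our purposes''). There is therefore no proof here to compare your proposal against. What the paper does contain is the specialization of the relevant Kazdan--Warner identities to the case $h=0$ (Proposition~\ref{kw}) and a refined asymptotic estimate (Lemma~\ref{estima}), both also attributed to \cite{jlmr}.

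That said, your outline for (i) and (ii) is essentially the standard route and matches in spirit how \cite{jlmr} proceeds: mass bound from Gauss--Bonnet, exclusion of interior bubbles via the $4\pi$ versus $2\pi$ quantization, boundary rescaling, classification of half-plane profiles, and mass saturation forcing a single bubble.

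For (iii), however, there is a genuine gap in what you wrote. Testing with vector fields $X$ tangent to $\partial\D$ and passing to the \emph{weak limit} in the resulting Pohozaev identity can only detect the \emph{tangential} component $\partial_\tau\Phi(p)$: all conformal Killing fields of the disk are tangent to $\partial\D$, and the one that would probe the normal direction (the field $F$ in Proposition~\ref{kw}) vanishes at the blow-up point, so the leading-order identity becomes $0=0$. Extracting the \emph{normal} component $\partial_\nu\Phi(p)$ requires a next-order expansion of $\int e^{u_n}\nabla G\cdot F$ in terms of the profile parameter $\lambda_n\to 1$, exactly as recorded in Lemma~\ref{estima},~b). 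Your sentence ``the normal derivative of $H$ at $p$ is what controls the radial part'' gestures at the right object but does not supply the mechanism; the honest statement is that one needs the sharp pointwise profile comparison from \cite{jlmr} (their Section~5), not merely weak convergence of measures. Without that refinement your argument stalls at $\partial_\tau\Phi(p)=0$ and does not yield the full condition $\nabla\Phi(p)=0$.
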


Let us emphasize that the information on the location of the blow-up point is encoded in the function $\Phi$, which depends on $K$ locally but it has a nonlocal dependence on $h$. This interesting fact does not have an analogue in the classical Nirenberg problem.

\medskip 

Prompted by this result, a blowing-up sequence of solutions for problem \eqref{gg} have been constructed via a version of the Lyapunov-Schmidt reduction, see \cite{angela}.

The main goal of this paper is to give the following existence result for equation \eqref{gg}, which shows a certain analogy with Theorem \ref{teoB}. 

\begin{theorem} \label{main} Let $K \in C^2(\overline{\D})$, $h \in C^2(\partial \D)$, with 

$$K \geq 0,\ h > 0 \mbox{ or } K >0, \ h\geq 0.$$

Assume that 

\begin{equation} \label{assumption} \nabla \Phi(x) \neq 0 \ \forall \ x \in \partial \D \, \mbox{ and } \ deg_B (\nabla \Phi, \D, 0) \neq 0, \end{equation} where $\Phi$ is defined in \eqref{PHI}. Then there exists a solution for \eqref{gg}.

\end{theorem}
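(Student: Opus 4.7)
I will follow a Leray-Schauder degree strategy in the spirit of Ji's Theorem \ref{teoB}, with the blow-up result of Theorem \ref{blowup} providing the compactness required to define and compute the degree. The first step is to rewrite \eqref{gg} as a fixed-point equation $u = T_{K,h}(u)$ for a compact operator on a Banach space such as $X = C^{2,\alpha}(\oD)$. Because of the Gauss-Bonnet compatibility condition, the natural setting is an affine subspace (for instance functions normalized by $\int_\D K e^u + \int_{\partial \D} h e^{u/2} = 2\pi$), with a Lagrange parameter if needed. Compactness of $T_{K,h}$ then follows from standard elliptic regularity.

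Next, I would extract an a priori estimate. By Theorem \ref{blowup}, any blowing-up sequence of solutions must produce a boundary point $p$ with $\nabla \Phi(p)=0$, which hypothesis \eqref{assumption} explicitly forbids. Hence solutions of \eqref{gg} are uniformly bounded in $C^{2,\alpha}(\oD)$, and the same bound must persist along any homotopy $(K_t, h_t)$, $t \in [0,1]$, whose associated $\Phi_t$ keeps $\nabla \Phi_t \neq 0$ on $\partial \D$. This makes the Leray-Schauder degree $d(K, h) := \deg_{LS}(I - T_{K, h}, B_R, 0)$ well defined, finite, and invariant along such admissible deformations.

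The core of the argument is then to show that $d(K, h)$ equals, up to a universal sign, $\deg_B(\nabla \Phi, \D, 0)$. I would deform $(K, h)$ through an admissible path to a model case where the LS degree can be read off explicitly (for instance by rescaling, or by perturbing to a generic configuration in which $\Phi$ is a Morse function with only nondegenerate critical points in $\D$). Via a Lyapunov-Schmidt reduction built on the bubble profiles constructed in \cite{angela}, parametrized by a concentration point and a scale, the reduced bifurcation equation should have $\nabla \Phi(p)$ as its leading term; each nondegenerate critical point of $\Phi$ in $\D$ would then contribute $(-1)^{\mathrm{ind}(p)}$ to $d(K,h)$, exactly matching its local Brouwer index. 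Summing over critical points of $\Phi$ yields $\deg_B(\nabla \Phi, \D, 0) \neq 0$ by hypothesis, so $d(K, h) \neq 0$ and \eqref{gg} admits a solution.

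The main obstacle is this final degree identification. Two subtleties have to be handled carefully: first, $\Phi$ depends nonlocally on $h$ through the harmonic extension $H$, so the choice of admissible homotopy is nontrivial — one must keep $\nabla \Phi_t \neq 0$ on $\partial \D$ throughout, even as $h_t$ varies. Second, blow-up is a purely boundary phenomenon, whereas $\deg_B(\nabla \Phi, \D, 0)$ is computed on the whole disk, so the topological information coming from interior critical points of $\Phi$ must be linked to the LS degree via a delicate expansion of the reduced functional and the compactness coming from the boundary hypothesis, rather than directly through boundary bubbling profiles.
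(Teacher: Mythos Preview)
Your overall framework---set up a compact fixed-point operator, use Theorem \ref{blowup} to get a priori bounds, and compute a Leray-Schauder degree via homotopy---matches the paper. But the mechanism you propose for the degree identification has a genuine gap, and it is precisely the one you flag at the end without resolving.

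A Lyapunov-Schmidt reduction built on bubble profiles is parametrized by a concentration point $p \in \partial \D$ and a scale, because blow-up for \eqref{gg} occurs only at the boundary. The reduced equation then sees $\nabla \Phi$ only on $\partial \D$; it cannot, by itself, produce $\deg_B(\nabla \Phi, \D, 0)$, which counts interior critical points. Summing local indices of boundary critical points gives zero (there are none, by hypothesis), not the quantity you want. So the proposed route from bubble profiles to the interior Brouwer degree is not just delicate---it is structurally the wrong parametrization.

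The paper sidesteps this by two moves you do not anticipate. First, it performs an explicit homotopy $(K_s, h_s) = (sK + (1-s)\Phi^2,\, sh)$ designed so that $\Phi_s \equiv \Phi$ for all $s$; this reduces the problem to $h=0$ with Gaussian curvature $\tilde{K}=\Phi^2$, where the equation has a clean mean-field variational structure. Second---and this is the key idea---instead of reducing onto bubbles, the paper minimizes the functional $J_s$ under the \emph{center-of-mass} constraint $P(u)=\int_\D e^u x / \int_\D e^u = a$ for $a$ ranging over the \emph{whole} disk $\D$. This yields a Lagrange multiplier $\mu_s(a)\in\R^2$ defined for all $a\in \D$. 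A further homotopy to $K$ close to $1$ ensures the constrained minimizer is unique, so $\mu_s$ is continuous. As $|a|\to 1$ the minimizers blow up, and the Kazdan-Warner identities force $\mu_s(a)/s \to -\nabla K(a)$; hence $\deg_B(\mu_s,\D,0)=\deg_B(\nabla K,\D,0)=\deg_B(\nabla\Phi,\D,0)$. A chain of homotopies and a homeomorphism then identify the Leray-Schauder degree of $J_s'$ with this Brouwer degree. The center-of-mass constraint is what bridges interior topology and boundary blow-up; your bubble-based reduction does not provide such a bridge.
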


If $K=0$, then $\Phi = 2H$, and the result above is consistent with the results of \cite[Theorem 1']{kcchang}. Indeed, any nondegenerate critical point of $H$ in  $\D$ gives a contribution of $-1$ to the Brouwer degree. Also \cite[Theorem 1]{kcchang} can be seen under our perspective, but this is not straightforward. This is related to the fact that, for the Nirenberg problem, formula \eqref{ChYg} gives the Brouwer degree of $G$ as defined in \eqref{defG}. It is important to point out here that the conjugate function defined in \cite{kcchang} is nothing but the normal derivative of the harmonic extension.

The connection of our work with \cite{manuela} remains obscure to the author.

The proof is based on the computation of the Leray Schauder degree of the problem, properly formulated. Indeed we shall see that the Leray Schauder coincides, possibly up to a sign, with the Brouwer degree of $\nabla \Phi$, which implies the result. This is done in several steps. In a first step, we use Theorem \ref{blowup} to pass via a homotopy to a problem with $h=0$, that is,

\begin{equation} \label{gg00}
\left\{\begin{array}{ll}
\displaystyle{-\Delta u = 2 \tK(x)e^u } \qquad & \text{ in }\D, \\
\displaystyle{\frac{\partial u}{\partial \nu} +2 = 0} \qquad  &\text{ on } \partial \D,
\end{array}\right.
\end{equation}
where $\tK= \Phi^2$. The main advantage in doing so is that \eqref{gg00} admits a nice variational formulation as a mean field problem. Problem \eqref{gg0} admits also a variational formulation but it is much more intricate, see \cite{CruzRuiz}. 

The related variational functional $J$ is bounded from below by the Moser-Trudinger inequality, but it fails to be coercive. The heuristic idea of the rest of the proof is as follows: we first fix $a \in \D$ and minimize $J(u)$ under the constraint:

$$ P(u)= \frac{\int_{\D} e^u x }{\int_{\D} e^u} =a.$$

This is the approach of \cite{Ji}, which is also inspired in the previous works \cite{Aubin, ChGYg, ChYg2}. Under this restriction, the functional $J$ becomes coercive and a minimizer $u_a$ is found. In so doing, we can solve equation \eqref{gg00} with an additional Lagrange multiplier $\mu(a) \in \R^2$. The main observation of this paper is that, as $a \to \partial \D$, then $u_a$ forms a blowing-up sequence of solutions. By Theorem \ref{blowup}, we have that  $\mu(a) \to - \nabla (\tilde{K})(a)$. Hence, if the degree of $\nabla \tilde{K}$ is nonzero, we can find $a\in \D$ such that $\mu(a)=0$. The argument of \cite{Ji} is different at this point and uses a previous work of the author \cite{preJi}.

\medskip

Of course there are several obstacles to perform this program. The more important is that the minimizer $u_a$ needs not be unique, and hence the Lagrange multiplier $\mu(a)$ is not continuously defined. We can overcome this difficulty by using another homotopy to pass to a problem where $\tK$ is close to $1$, and in such case a perturbation argument allows us to prove the uniqueness of the constrained minimizer. 

At the end, we need to compute the Leray-Schauder degree of such problem (and not only the existence of solution), in order to show that the existence of solution is preserved along the homotopies. This is rather technical, and uses the invariance properties of the Leray-Schauder degree under homotopies and homeomorphisms.

The rest of the paper is organized as follows. In Section 2 we present some preliminary results and we establish the functional setting and the variational formulation of the problem. Section 3 is devoted to the study of the constrained minimization problem, the existence of minimizer and, if $\tK$ is close to $1$, its uniqueness. The behavior of the corresponding Lagrange multiplier is also studied. In Section 4 we conclude the computation of the Leray-Schauder degree of the problem.

\section{Preliminaries} \label{sec:prelim}
\setcounter{equation}{0}

In this section we collect some preliminary results and establish the functional setting and the variational formulation of the problem.

\subsection{Kazdan-Warner identities and estimates for the case $h=0$.}

As commented in the introduction, in this paper we will first reduce our problem to the case $h=0$. In this subsection we recall some information that appears in \cite{jlmr} adapted to such situation. The first result is the following Kazdan-Warner identity, which is just contained in \cite[Lemma 2.6 and Proposition 2.7]{jlmr}).

\begin{proposition} \label{kw} Let $u$ be a solution of the \eqref{gg00}. Then:
	
	\begin{equation} \label{kw1} \int_{\D} e^u \, \nabla \tK \cdot \tau =0, \mbox{ and}\end{equation}
	\begin{equation} \label{kw2} \int_{\D} e^u \, \nabla \tK \cdot F = 0,\end{equation}
	where $\tau(x_1,x_2)= (-x_2,x_1)$ and $F(x_1,x_2)=(1-x_1^2+x_2^2, -2x_1 x_2)$ (in complex notation, $\tau(x)= ix $ and $F(x)= 1-x^2$). 
	
\end{proposition}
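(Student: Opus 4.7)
My plan is to apply the standard Kazdan--Warner/Pohozaev technique, exploiting that $\tau$ and $F$ are conformal Killing vector fields on $\D$ which are tangent to $\partial\D$. In complex notation $\tau(z)=iz$ and $F(z)=1-z^2$ are holomorphic, so in two dimensions they automatically satisfy the conformal Killing condition $\partial_iX_j+\partial_jX_i=(\dv X)\delta_{ij}$, and a direct computation gives $X\cdot\nu=0$ on $\partial\D$ in both cases. The idea is to multiply the interior equation $-\Delta u=2\tK e^u$ by $X\cdot\nabla u$, integrate over $\D$, and derive a general formula that can then be specialised to $X=\tau$ and $X=F$.

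Rewriting the right-hand side as $2\tK\,X\cdot\nabla e^u$ and integrating by parts (the boundary contribution vanishes thanks to $X\cdot\nu=0$) yields $-2\int_{\D}e^u(\nabla\tK\cdot X+\tK\,\dv X)$. Substituting the equation back in the form $2\tK e^u=-\Delta u$ into the $\dv X$ summand produces the clean identity
\begin{equation*}
2\int_{\D}e^u\,\nabla\tK\cdot X=\int_{\D}\Delta u\,\bigl(\dv X+X\cdot\nabla u\bigr).
\end{equation*}

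I would then apply Green's identity to each piece on the right. For the $X\cdot\nabla u$ piece the crucial input is the pointwise identity
\begin{equation*}
\nabla u\cdot\nabla(X\cdot\nabla u)=\tfrac12\,\dv\bigl(X\,|\nabla u|^2\bigr),
\end{equation*}
which is precisely where the conformal Killing hypothesis enters; combined with $X\cdot\nu=0$ this makes the interior contribution collapse and leaves only $-2\int_{\partial\D}X\cdot\nabla u$ after inserting the Neumann condition $\partial_\nu u=-2$. For the $\dv X$ piece, holomorphicity of $X$ forces $\dv X$ to be harmonic, so two integrations by parts reduce it to $-\int_{\partial\D}u\,\partial_\nu(\dv X)-2\int_{\partial\D}\dv X$. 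Combining the two computations gives a master formula expressing $\int_{\D}e^u\nabla\tK\cdot X$ as a sum of three explicit boundary integrals.

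It then remains to evaluate this master formula in the two cases. For $X=\tau$ the conclusion is immediate: $\dv\tau\equiv0$ kills the first two terms, and $\tau\cdot\nabla u=\partial_\theta u$ integrates to zero around the circle, yielding \eqref{kw1}. The main obstacle is the case $X=F$, where $\dv F=-4x_1\not\equiv0$. Here I expect two nontrivial surviving contributions, both proportional to $\int_0^{2\pi}u(\cos\theta,\sin\theta)\cos\theta\,d\theta$: one coming from $\partial_\nu(\dv F)=-4\cos\theta$ on $\partial\D$, the other from integrating $F\cdot\nabla u$ by parts along the circle via the decomposition $F|_{\partial\D}=-2\sin\theta\,T$ with $T$ the positively oriented unit tangent (so that $F\cdot\nabla u=-2\sin\theta\,\partial_\theta u$). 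A careful bookkeeping of signs should show that these two contributions are equal in magnitude and opposite in sign, so they cancel exactly and \eqref{kw2} follows.
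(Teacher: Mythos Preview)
Your argument is correct: the master formula is right, and the cancellation for $X=F$ works exactly as you anticipate (the $\partial_\nu(\dv F)=-4\cos\theta$ contribution gives $+4\int_{\partial\D}u\cos\theta$, while integrating $F\cdot\nabla u=-2\sin\theta\,\partial_\theta u$ by parts along $\partial\D$ gives $-4\int_{\partial\D}u\cos\theta$, and the constant term $\int_{\partial\D}\dv F$ vanishes).

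As for comparison with the paper: the paper does not actually prove this proposition but simply cites \cite[Lemma 2.6 and Proposition 2.7]{jlmr}. Your write-up is therefore strictly more than what appears here. The cited results in \cite{jlmr} treat the general problem \eqref{gg0} with a nontrivial boundary term $h$, and the identities there are obtained by the same Pohozaev/Kazdan--Warner mechanism---multiplying by $X\cdot\nabla u$ for a conformal Killing field $X$ tangent to $\partial\D$---so your approach is in the same spirit, just specialised to $h=0$ from the outset, which makes the boundary bookkeeping considerably cleaner.
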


\medskip 

The following asymptotic estimate will also be of use:

\begin{lemma} \label{estima} Let $u_n$ be a sequence of solutions of the problem
	\begin{equation} \label{ggn2}
	\left\{\begin{array}{ll}
	\displaystyle{-\Delta u_n = 2\tK_n(x)e^{u_n} } \qquad & \text{in $\D$},\\
	\displaystyle{\frac{\partial u_n}{\partial \nu} +2 = 0} \qquad  &\text{on $\partial \D $},
	\end{array}\right.
	\end{equation}
	where $\tK_n \to \tK>0$ in $C^2(\oD)$. If $u_n$ is a blowing-up sequence, namely, $\sup \{ u_n \} \to +\infty$, then there exists a unique point $p \in \partial \D$  such that the following statements hold true.
	
\begin{enumerate}
	
	\item[a)] $\tK_ne^{u_n} \weakto 2\pi \delta_p$.
	\item[b)] Assume for simplicity that $p=(1,0)$. There exists a sequence  $\lambda_n \in (0,1)$, $\lambda_n \to 1$, such that, for any $G \in C^2(\oD)$, 
	$$ \int_{\D} e^{ u_n} \nabla G \cdot F = 4 \pi \frac{\partial_{\nu}G(p)}{\tK(p)} (1-\lambda_n)+ o(1-\lambda_n),$$
	where $F$ is defined in Proposition \ref{kw}.
\end{enumerate}
\end{lemma}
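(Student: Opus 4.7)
Part (a) is immediate from Theorem \ref{blowup} applied with $h_n \equiv 0$ and $K_n = \tK_n$: the formula for $\beta$ gives $\beta = 0$, hence $\tK_n e^{u_n} \weakto 2\pi \delta_p$.

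For part (b), the plan is to exploit the conformal invariance of the $h=0$ equation. Introduce the Möbius family $\Phi_\lambda \colon \D \to \D$, $\Phi_\lambda(z) = (z+\lambda)/(1+\lambda z)$, with $\Phi_\lambda'(z) = (1-\lambda^2)/(1+\lambda z)^2$, so that $\Phi_\lambda(0) = \lambda \to p = (1,0)$ as $\lambda \to 1$. A direct algebraic computation gives the key identity
\[
F(\Phi_\lambda(z)) = 1 - \Phi_\lambda(z)^2 = \Phi_\lambda'(z)\,F(z) \qquad (\text{complex notation}),
\]
expressing that $F$ is the infinitesimal generator of $\{\Phi_\lambda\}$ and is therefore preserved, as a conformal vector field, under its flow. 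Choose $\lambda_n \in (0,1)$ so that the conformal pullback $v_n(z) := u_n(\Phi_{\lambda_n}(z)) + 2\log|\Phi_{\lambda_n}'(z)|$ is normalized (for instance so that $v_n$ attains its maximum at $z=0$). By conformal invariance of both the PDE and the boundary condition, $v_n$ solves $-\Delta v_n = 2\tK_n(\Phi_{\lambda_n}(z))e^{v_n}$ in $\D$ with $\partial_\nu v_n + 2 = 0$ on $\partial\D$. The blow-up of $u_n$ at $p$ forces $\lambda_n \to 1$, and since $\Phi_{\lambda_n}(z) \to p$ for every $z \neq -p$, standard elliptic compactness yields $v_n \to v_\infty$ in $C^2_{\mathrm{loc}}$, where $v_\infty$ is the rotationally symmetric spherical-cap solution $e^{v_\infty(z)} = (4/\tK(p))/(1+|z|^2)^2$.

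Performing the change of variables $x = \Phi_{\lambda_n}(z)$ (so $e^{u_n(x)}\,dx = e^{v_n(z)}\,dz$) and using the above identity for $F$ yields
\[
\int_\D e^{u_n}\,\nabla G \cdot F\,dx = (1-\lambda_n^2)\int_\D e^{v_n(z)}\,\mathrm{Re}\!\left(\overline{\nabla G(\Phi_{\lambda_n}(z))}\,\frac{1-z^2}{(1+\lambda_n z)^2}\right)dz.
\]
The inner integral tends to $\int_\D e^{v_\infty(z)}\,\mathrm{Re}(\overline{\nabla G(p)}(1-z)/(1+z))\,dz$. Writing $(1-z)/(1+z) = (1-|z|^2)/|1+z|^2 - 2i\,\mathrm{Im}(z)/|1+z|^2$, the contribution of $\partial_{x_2} G(p)$ vanishes by the reflection symmetry $v_\infty(\bar z) = v_\infty(z)$, while the contribution of $\partial_{x_1} G(p)$ is evaluated in polar coordinates using the Poisson-type identity $\int_0^{2\pi} d\theta/(1+r^2+2r\cos\theta) = 2\pi/(1-r^2)$, producing the explicit value $2\pi/\tK(p)$. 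At $p=(1,0)$ the outward normal is $e_1$, so $\partial_\nu G(p) = \partial_{x_1} G(p)$; combining with $1-\lambda_n^2 = 2(1-\lambda_n)(1+o(1))$ gives the claimed expansion.

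The main obstacle I foresee is the justification of the passage to the limit in the integral: as $\lambda_n \to 1$ the factor $|1+\lambda_n z|^{-2}$ develops a singularity at $z = -p$, and a uniform integrable majorant for $e^{v_n(z)}/|1+\lambda_n z|^2$ is required. The decay $e^{v_\infty(z)} \sim (1+|z|^2)^{-2}$ inherited from the convergence $v_n \to v_\infty$ should suffice; alternatively, one can split the integration into a shrinking neighborhood of $-p$ (on which $v_n$ is uniformly small, since $-p$ lies in the blow-down region of the original sequence, cf.\ Theorem \ref{blowup}(i)) and its complement (on which $\Phi_{\lambda_n}$ stays bounded away from $-p$ and dominated convergence applies directly).
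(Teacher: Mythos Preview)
Your approach coincides with what the paper defers to in \cite{jlmr}: the parameter $\lambda_n$ there is precisely the M\"obius parameter in a pullback of the form you describe, and your identity $F\circ\Phi_\lambda=\Phi_\lambda'\,F$ together with the explicit evaluation of the limiting integral reproduces the expansion of the term $II_{21}$ that the paper quotes. The change of variables, the Poisson-type computation giving $2\pi/\tK(p)$, and the treatment of the singularity at $z=-p$ are all correct in outline.

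One step should be tightened. The normalization ``$v_n$ attains its maximum at $z=0$'' is a two-dimensional constraint, while $\{\Phi_\lambda:\lambda\in(0,1)\}$ is only a one-parameter family, so it cannot be enforced in general. With a legitimate one-dimensional normalization (e.g.\ real part of the barycenter equal to zero) the limit $v_\infty$ is a bubble $\psi_{a_0}$ with $a_0\in\D$ not necessarily the origin, and a direct computation gives
\[
\int_{\D} e^{\psi_{a_0}}\,\frac{1-|z|^2}{|1+z|^2}\,dz \;=\; 2\pi\,\frac{1-|a_0|^2}{|1+a_0|^2},
\]
so the constant multiplying $(1-\lambda_n)$ would change. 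This is harmless for the applications in the paper (in both Proposition~\ref{K casi 1} and Proposition~\ref{chiave} the coefficient cancels between terms), but to match the lemma \emph{as stated} you need either the finer information from \cite{jlmr} that the profile is centred (i.e.\ the tangential drift of the maximum point is of lower order than $1-\lambda_n$), or to absorb the factor $(1-|a_0|^2)/|1+a_0|^2$ into a redefinition of $\lambda_n$.
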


Observe that under the assumptions of Lemma \ref{estima}, and thanks to \eqref{kw1}, we conclude that $\nabla \tK(p)=0$, which is a particular case of \eqref{cond-nir2} for the case $h=0$.

\begin{proof}

The proof is basically contained in \cite{jlmr}. Indeed, $\lambda_n$ appears in the profile description of \cite[(5.4)]{jlmr}, see also Lemma 5.2 there. We recall the estimate of the term $(5.7)$ in \cite{jlmr}, and replace the partial derivatives of $K$ with those of $G$.  This leads to the term $ (II)$ in the proof of \cite[Theorem 5.3]{jlmr}; see in particular the asymptotic expression of $II_{21}$ there. By making $h=0$ we can conclude the expression given in b).

\end{proof}

\subsection{Functional setting, and simplification of the problem}

We define $X= \{u \in H^1(\D):\ \int_{\D} u=0 \}$, with the scalar product:

$$ \langle u, v \rangle = \int_{\D} \nabla u \cdot \nabla v.$$

Thanks to the Poincar\'{e} inequality, this scalar product gives us a norm $\| \cdot \|$ equivalet to the usual norm in the Sobolev space $H^1(\D)$.

We identify $X$ with its dual $X^*$ via the Riesz representation, that is, any element $u \in X$ is identified with the linear mapping $u: X \to \R$, $u(v) = \langle u, v \rangle$. 

We define the continuous and nonlinear map:

\begin{equation} \label{def-c} c : X \to \R \mbox{ such that } \int_{\D} K e^{u+c(u)}+ \int_{\partial \D} h e^{(u+c(u))/2} =2 \pi.\end{equation}

Indeed, $e^{c(u)}$ by the assumptions on $K$, $h$, the map

$$ c \mapsto e^c \int_{\D} K e^{u}+ e^{c/2}\int_{\partial \D} h e^{u/2}$$

is strictly increasing and takes all positive values exactly once.

We are now in conditions to define

\begin{equation} \begin{array}{c} \label{def-T} T: X \to X,\ T(u)=v,\ \mbox{ where } v \in X \mbox{ solves:} \\   \\
\left\{\begin{array}{ll}
\displaystyle{-\Delta v = 2Ke^{u+c(u)} } \qquad & \text{ in }\D, \\
\displaystyle{\frac{\partial v}{\partial \nu} +2 = 2he^{(u+c(u))/2}} \qquad  &\text{ on } \partial \D.
\end{array}\right. \end{array}
\end{equation}
 
Observe that by \eqref{def-c} ther exists such a function $v \in X$ and is unique. 

It is not difficult to check that $T$ is a compact operator. Indeed, if $u_n \rightharpoonup u$ in $X$, then $e^{u_n} \to e^u$ in $L^1(\D)$ and $e^{u_n/2} \to e^{u/2}$ in $L^1(\partial \D)$. As a consequence $c(u_n) \to c(u)$. By standard regularity one concludes that $T(u_n) \to T(u) $ in $X$.

Let us now observe that the fixed points of the operator $T$ give rise to solutions of \eqref{gg}. Indeed, if $T(u)=u$, then $u+c(u)$ is a solution to \eqref{gg}.

In this paper we will be concerned with the computation of the Leray-Schauder degree of the map $I-T$ in a ball $B_R$ of sufficiently large radius $R$.

Of course the maps $c$, $T$ defined above depend on the functions $K$, $h$. We shall write $c_{K,h}$ and $T_{K,h}$ to highlight this dependence.

Our first result allows us to consider, in what follows, the case where $h=0$.

\begin{proposition} \label{h=0} For a sufficiently large radius $R$, we have that:
	$$ deg_{LS} (I-T_{K,h}, B_R, 0) = deg_{LS}(I-T_{\tilde{K},0}, B_R,0),$$ where $\tilde{K}= \Phi^2$. Here $deg_{LS}$ stands for the usual Leray-Schauder degree and $\Phi$ is defined as in \eqref{PHI}.
\end{proposition}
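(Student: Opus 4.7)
The plan is to deform $(K,h)$ to $(\tK,0)$ through a one-parameter family $(K_t,h_t)_{t\in[0,1]}$ engineered so that the auxiliary function $\Phi$ of \eqref{PHI} is frozen along the path, and then invoke the homotopy invariance of the Leray-Schauder degree. Concretely, set $h_t:=(1-t)h$, so that its harmonic extension is $H_t=(1-t)H$, and demand that
\[
\Phi_t(x):=H_t(x)+\sqrt{H_t(x)^2+K_t(x)}\;\equiv\;\Phi(x).
\]
Solving this identity for $K_t$ forces the explicit choice
\[
K_t \;=\; \Phi^{2}-2(1-t)H\Phi \;=\; \Phi\bigl(\Phi-2(1-t)H\bigr),
\]
and a direct computation using $\Phi-2H=\sqrt{H^{2}+K}-H$ gives $K_0=K$ and $K_1=\tK=\Phi^{2}$. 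Since $\Phi\geq 2H\geq 2(1-t)H\geq 0$, one has $K_t\geq 0$ throughout the homotopy.

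Next I would check that the sign assumption of Theorem \ref{main} is preserved along the path. If $h>0$, then $H>0$ on $\oD$ by the maximum principle, so $h_t>0$ for $t<1$ and $K_1=\Phi^{2}>0$. If instead $K>0$, then $\Phi\geq\sqrt{K}>0$ and a short case distinction (depending on the sign of $1-2t$) shows $\Phi-2(1-t)H>0$, so $K_t>0$ for every $t$. In both regimes $H^{2}+K$ is strictly positive on $\oD$, so $\Phi$ is as smooth as $H$ and $K$, the map $t\mapsto(K_t,h_t)$ is $C^{2}$-continuous, and hence $t\mapsto T_{K_t,h_t}$ is a continuous family of compact perturbations of the identity on $X$.

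The decisive analytic step is a uniform a priori bound: there exists $R_0>0$ such that every fixed point of $T_{K_t,h_t}$ with $t\in[0,1]$ lies in $B_{R_0}$. Arguing by contradiction, suppose $u_n=T_{K_{t_n},h_{t_n}}(u_n)$ with $t_n\to t^{*}$ and $\|u_n\|\to\infty$, and set $v_n:=u_n+c(u_n)$, which solves \eqref{gg} with coefficients $(K_{t_n},h_{t_n})$. The Gauss-Bonnet identity
\[
\int_{\D}K_{t_n}e^{v_n}+\int_{\partial\D}h_{t_n}e^{v_n/2}=2\pi
\]
forbids $v_n\to-\infty$ uniformly, while a standard Brezis-Merle type dichotomy forces $\sup v_n\to+\infty$ (otherwise $v_n$, and therefore $u_n$, would be bounded in $L^{\infty}$ and hence in $X$). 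Theorem \ref{blowup} then produces a blow-up point $p\in\partial\D$ with $\nabla\Phi_{t^{*}}(p)=0$, but $\Phi_{t^{*}}\equiv\Phi$ by construction, contradicting \eqref{assumption}. For any $R\geq R_0$, homotopy invariance of the Leray-Schauder degree applied to the family $I-T_{K_t,h_t}$ then delivers the desired equality. The main obstacle is precisely this a priori bound, and the entire purpose of freezing $\Phi$ along the homotopy is to make Theorem \ref{blowup} translate would-be non-compactness into a direct contradiction with \eqref{assumption}; the rest of the ingredients (propagation of the sign condition, continuity of the family, and the blow-up-vs-compactness dichotomy) are routine.
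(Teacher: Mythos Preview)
Your proof is correct and is essentially the same as the paper's: both freeze $\Phi$ along a linear homotopy of $(K,h)$ and then invoke Theorem~\ref{blowup} together with assumption~\eqref{assumption} to rule out blow-up, giving the a priori bound needed for homotopy invariance of the Leray--Schauder degree. The only cosmetic difference is the parametrization: the paper sets $h_s=sh$ and writes the induced Gaussian curvature as the convex combination $K_s=sK+(1-s)\Phi^2$, which is algebraically identical to your $K_t=\Phi^2-2(1-t)H\Phi$ under $s=1-t$ but makes the nonnegativity of $K_s$ immediate without any case distinction.
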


\begin{proof} The proof uses a convenient homotopy. For any $s \in [0,1]$, define 
$$ h_s(x)= sh(x).$$

Of course the corresponding harmonic extension satisfies $H_s(x)= s H(x)$. 
Since the blow-up analysis of Theorem \ref{blowup} depends on the map $\Phi$ defined in \eqref{PHI}, we plan to define $K_s$ so that $\Phi_s$ remains constant, that is:

$$ \Phi_s = H_s + \sqrt{H_s^2 + K_s}= H + \sqrt{H^2 + K}= \Phi.$$

The algebra is nice here and an easy computation gives the right definition of $K_s$:

$$ K_s(x)= s K(x) + (1-s) \Phi^2(x).$$

Let us point out that $h_s \geq 0$, $K_s \geq 0$ and at least one of them is strictly positive, for any $s \in [0,1].$

Observe moreover that the operator $T_{K_s, h_s}$ varies continuously on the variable $s$. By Theorem \ref{blowup} and assumption \eqref{assumption}, all solutions of the equation

$$  T_{K_s, h_s}(u)=u$$
are bounded from above. By regularity arguments one obtains that for some $R>0$ all solutions of such equation belong to a certain ball $B_R$ in $X$.

One concludes now by the homotopy invariance of the Leray-Schauder degree, since:

$$ h_1=h, \ h_0 =0;  \ K_1 =K, \ K_0 = \tilde{K}.$$

\end{proof}

%

Thanks to the above proposition, in the rest of the paper we will be concerned with the Leray-Schauder degree associated to problem \eqref{gg00}. For the sake of clarity in the notation, from this point on we will drop the tilde grapheme and consider the problem

\begin{equation} \label{simple} \left\{\begin{array}{ll}
\displaystyle{-\Delta u = 2Ke^{u} } \qquad & \text{ in }\D, \\
\displaystyle{\frac{\partial u}{\partial \nu}} +2 = 0 \qquad  &\text{ on } \partial \D,
	\end{array}\right.
\end{equation}
for a positive function $K \in C^2(\oD)$. 
It is important to observe, for later use, that any solution of \eqref{simple} satisfies:

\begin{equation} \label{mass} \int_{\D} K e^u = 2 \pi \Rightarrow \frac{2 \pi}{max \, K} \leq \int_{\D} e^u \leq \frac{2 \pi}{min \, K}. \end{equation}

Thanks to Proposition \ref{h=0}, the proof of Theorem \ref{main} reduces to show the following result.

\begin{theorem} \label{main2} Let $K \in C^2(\overline{\D})$, $K>0$.
	Assume that 
	
	\begin{equation} \label{assumption2} \nabla K(x) \neq 0 \ \forall \ x \in \partial \D. \end{equation} 

Then for sufficiently large $R>0$,  

$$deg_{LS}(I-T_{K,0}, B_R, 0)= \pm deg_B (\nabla K, \D, 0).$$
	\end{theorem}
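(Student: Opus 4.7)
My plan follows the roadmap sketched by the author in the introduction. The first step is to set up the variational formulation of \eqref{simple}: on the space $X$, consider the mean field functional
\begin{equation*}
J(u) = \frac{1}{2}\int_{\D} |\nabla u|^2 + 2\int_{\partial \D} u - 4\pi \log \int_{\D} K e^{u},
\end{equation*}
whose critical points, after adding back the constant $c(u)$ from \eqref{def-c}, are precisely the fixed points of $T_{K,0}$. By Moser-Trudinger $J$ is bounded from below, but the noncompact group of conformal automorphisms of $\D$ destroys coercivity. To overcome this, I would introduce the center of mass $P(u) = \int_{\D} x e^u / \int_{\D} e^u$ and, for each $a \in \D$, minimize $J$ on the constraint manifold $\M_a = \{u \in X : P(u) = a\}$. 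On $\M_a$ the functional is coercive and admits a minimizer $u_a$; its Euler-Lagrange equation is \eqref{simple} modified by an additive Lagrange multiplier vector $\mu(a) \in \R^2$ coming from the two scalar constraints, so that fixed points of $T_{K,0}$ correspond exactly to zeros of $\mu$.

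The main obstacle, as the author stresses, is that $u_a$ need not be unique, so a priori $\mu$ is not well-defined as a continuous map $\D \to \R^2$. I would first prove the theorem for $K$ a small $C^2$-perturbation of the constant $1$, using a quantitative implicit-function argument based at the explicit family of solutions for $K \equiv 1$ (the orbit of $0$ under the M\"obius group, parameterized bijectively by $P(u) = a$). This yields a unique smooth $u_a$ for each $a \in \D$ and a continuous map $\mu : \D \to \R^2$. To upgrade to a general $K$ satisfying \eqref{assumption2}, I would connect it to a perturbative $K_0$ via a homotopy $K_t$ chosen so that $\nabla K_t$ remains nonzero on $\partial \D$ (which by Brouwer homotopy invariance preserves $\deg_B(\nabla K_t, \D, 0)$); Theorem \ref{blowup} applied in the case $h=0$ forbids blow-up along the homotopy, so all solutions stay in a fixed ball $B_R$, and the Leray-Schauder degree is preserved.

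The quantitative heart of the argument is the behavior of $\mu(a)$ as $|a| \to 1$. Using Theorem \ref{blowup} in the case $h=0$, the minimizers $u_a$ with $|a|$ close to $1$ blow up at a boundary point which, by a direct computation using the definition of $P$, must coincide in the limit with $a/|a|$. Testing the Euler-Lagrange equation for $u_a$ against the vector fields $\tau$ and $F$ of Proposition \ref{kw} and using the asymptotics of Lemma \ref{estima}(b), the Kazdan-Warner identities cancel the dominant unconstrained terms and leave
\begin{equation*}
\mu(a) = -\, c(a)\, \nabla K\!\left(\tfrac{a}{|a|}\right) + o(1) \quad \text{as } |a| \to 1,
\end{equation*}
for some strictly positive continuous scalar factor $c(a)$. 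Under \eqref{assumption2}, $\mu$ therefore extends continuously to $\overline{\D}$ with $\mu|_{\partial \D}$ homotopic through nonvanishing fields to a positive multiple of $-\nabla K|_{\partial \D}$, so that $\deg_B(\mu, \D, 0) = \pm \deg_B(\nabla K, \D, 0)$.

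The final step, where the technical bulk lies, is to identify $\deg_{LS}(I - T_{K,0}, B_R, 0)$ with $\deg_B(\mu, \D, 0)$ up to a universal sign. In the perturbative regime, uniqueness of $u_a$ lets one build a homeomorphism between a neighborhood of the solution manifold $\{u_a : a \in \D\}$ in $X$ and a product $V \times \D$, where $V$ is a neighborhood of $0$ in the $L^2$-orthogonal complement of the two constraint directions, conjugating $I - T_{K,0}$ to a map of the form $(\xi, a) \mapsto (L\xi, \mu(a))$ with $L$ an isomorphism on $V$. The product and homeomorphism invariance properties of the Leray-Schauder degree then yield $\deg_{LS}(I - T_{K,0}, B_R, 0) = \deg_{LS}(L) \cdot \deg_B(\mu, \D, 0) = \pm \deg_B(\nabla K, \D, 0)$. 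Verifying the regularity of this homeomorphism and controlling the spectrum of $L$ (equivalently, the transverse nondegeneracy of $J$ as a second variation at $u_a$ relative to $\M_a$) is the principal technical difficulty of the argument, and the step where I expect the bulk of the work to go.
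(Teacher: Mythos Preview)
Your proposal follows essentially the same route as the paper: constrained minimization on $M_a=\{P(u)=a\}$, perturbative uniqueness near $K\equiv 1$ based on the explicit family $\varphi_a$, Kazdan--Warner asymptotics for the Lagrange multiplier, and a reduction of the Leray--Schauder degree to the Brouwer degree of $\mu$ via a homeomorphism on a tubular neighborhood of $\{u_a\}$. The paper packages this last step as a chain of explicit homotopies $F_1,F_2,F_3,F_4$ together with a concrete homeomorphism $\Gamma$ rather than a single conjugation, but the content is the same, and you correctly locate the bulk of the work there.

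One genuine subtlety is missing from your outline. At the perturbative endpoint you must know that \emph{every} fixed point of $T_{K_0,0}$ already lies in the tubular neighborhood of $\{u_a\}$, so that the excised degree equals the full degree. Theorem~\ref{blowup} alone does not give this: along the natural homotopy $K_s=sK+(1-s)$ one has $\nabla K_s=s\,\nabla K\to 0$, so the hypothesis of Theorem~\ref{blowup} degenerates as $s\to 0$ and the resulting a priori bound may blow up, leaving no room between the IFT radius in $a$ (which needs $s$ small depending on $r$) and the center-of-mass radius of actual solutions. The paper closes this circularity in Proposition~\ref{K casi 1} by dividing the Kazdan--Warner identities \eqref{kw1}, \eqref{kw2} by $s$ and invoking Lemma~\ref{estima}(b) to show that any blow-up point along $s_n\to 0$ would still satisfy $\nabla K(p)=0$; this yields bounds on solutions and on $|P(u)|<\bar r<1$ that are \emph{uniform in $s\in(0,1]$}, after which one fixes $r^*>\bar r$ and then $s<s(r^*)$. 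Relatedly, your asymptotic $\mu(a)\sim -c(a)\,\nabla K(a/|a|)$ cannot be taken at a fixed perturbative $K_0$, since there $|a|<r<1$; in the paper it is a joint limit $\mu_s(a)/s\to -\nabla K(a)$ as $|a|\to 1$ with $s<s(|a|)\to 0$ (Proposition~\ref{chiave}). These are the same Kazdan--Warner tools you already invoke, so the fix is in hand, but the order of quantifiers is what makes the argument close.
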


\begin{remark} The $\pm$ sign in the above theorem is due to the possible change of sign when one uses the invariance of the Leray-Schauder degree under homeomorphisms, as will be shown in Section 4. We conjecture that Theorem \ref{main2} holds without change of sign. However, for the purpose of showing existence of solutions, the sign is uninfluential.
\end{remark}

In order to prove Theorem \ref{main2} we will use another homotopy to pass to a problem for which $K$ is arbitrarily close to a positive constant. This idea is reminiscent from \cite{ChGYg, ChYg}. For any $s \in (0,1)$, define 

\begin{equation} \label{Ks} K_s = s K(x) + (1-s), \ s \in (0,1). \end{equation}

\begin{proposition} \label{K casi 1} There exists $R>0$ such that for any $s \in (0,1]$ and any solution $u$ of

\begin{equation} \label{simples} \left\{\begin{array}{ll}
\displaystyle{-\Delta u = 2K_se^{u} } \qquad & \text{ in }\D, \\
\displaystyle{\frac{\partial u}{\partial \nu}} +2 = 0 \qquad  &\text{ on } \partial \D,
\end{array}\right.
\end{equation}
the bound $\|u\| \leq R$ holds. As a consequence,
	$$ deg_{LS} (I-T_{K,0}, B_R, 0) = deg_{LS}(I-T_{K_s,0}, B_R,0).$$

\end{proposition}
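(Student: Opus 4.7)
The plan is a proof by contradiction, using the Kazdan-Warner identities (Proposition \ref{kw}) together with the refined boundary asymptotics of Lemma \ref{estima} to force $\nabla K(p)=0$ at any would-be blow-up point $p\in\partial\D$, contradicting \eqref{assumption2}. Once the uniform bound is established, the claimed degree equality is immediate from the homotopy invariance of the Leray-Schauder degree, since $T_{K_s,0}$ depends continuously on $s$.

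First I would reduce an $H^1$-blow-up to an $L^{\infty}$-blow-up. Suppose the bound fails: there exist $s_n\in(0,1]$ and solutions $u_n\in X$ of \eqref{simples} with $s=s_n$ such that $\|u_n\|\to\infty$. If $\sup u_n$ remained bounded above, then $-\Delta u_n=2K_{s_n}e^{u_n}$ would be bounded in $L^\infty$, and together with the fixed Neumann datum $\partial u_n/\partial\nu=-2$ and the normalization $\int_\D u_n=0$, linear elliptic regularity would give a uniform $H^1$-bound, a contradiction. Hence $\sup u_n\to+\infty$. Passing to a subsequence with $s_n\to s_*\in[0,1]$, one has $K_{s_n}\to K_{s_*}:=s_*K+(1-s_*)$ in $C^2(\oD)$ and $K_{s_*}>0$ (convex combination of positive functions). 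Lemma \ref{estima} then supplies a unique blow-up point $p\in\partial\D$ with $K_{s_n}e^{u_n}\weakto 2\pi\delta_p$, together with the refined expansion in part (b).

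The heart of the argument is extracting both components of $\nabla K(p)$. Applying Proposition \ref{kw} to $u_n$ with curvature $K_{s_n}$ and using $\nabla K_{s_n}=s_n\nabla K$ with $s_n>0$, I obtain
\[
\int_\D e^{u_n}\nabla K\cdot\tau=0, \qquad \int_\D e^{u_n}\nabla K\cdot F=0.
\]
After rotating coordinates so that $p=(1,0)$, the first identity passes to the limit using the concentration in (a) and the uniform continuity of $\nabla K\cdot\tau/K_{s_n}$, yielding $\nabla K(p)\cdot\tau(p)=0$, i.e.\ the tangential derivative of $K$ vanishes at $p$. In the second identity the leading concentration term drops out because $F(p)=0$; instead the refined estimate in (b), applied with $G=K$, gives
\[
0=4\pi\frac{\partial_\nu K(p)}{K_{s_*}(p)}(1-\lambda_n)+o(1-\lambda_n),
\]
so dividing by $1-\lambda_n>0$ one concludes $\partial_\nu K(p)=0$. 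Since $\{\tau(p),\nu(p)\}$ is an orthonormal basis of $\R^2$, this forces $\nabla K(p)=0$, contradicting \eqref{assumption2}.

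The case I expect to be most delicate is $s_*=0$: there the blow-up criterion of Theorem \ref{blowup} is vacuous, since for $h=0$ and $K_{s_*}\equiv1$ the function $\Phi=\sqrt{K_{s_*}}$ is constant and carries no information about $p$. What rescues this range is precisely the factorization $\nabla K_{s_n}=s_n\nabla K$: dividing the Kazdan-Warner identities by $s_n>0$ reinstates the fixed, nonvanishing gradient $\nabla K$, and the extraction above goes through uniformly for every $s_*\in[0,1]$. With the uniform $R$ in hand, for any $s\in(0,1]$ the family $\{T_{K_t,0}\}_{t\in[s,1]}$ is a continuous homotopy of compact operators on $\overline{B_R}$ with no fixed point on $\partial B_R$, and homotopy invariance of the Leray-Schauder degree yields the claimed equality.
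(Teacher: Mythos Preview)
Your proof is correct and follows essentially the same route as the paper: divide the Kazdan--Warner identities \eqref{kw1}--\eqref{kw2} by $s_n$, pass to the limit using Lemma~\ref{estima}(a) for the tangential component and Lemma~\ref{estima}(b) for the normal component, and obtain $\nabla K(p)=0$ in contradiction with \eqref{assumption2}. The only cosmetic differences are that the paper dispatches the case $s_*>0$ directly via Theorem~\ref{blowup} (since then $\nabla K_{s_*}=s_*\nabla K\neq 0$ on $\partial\D$) and focuses the identity argument on $s_n\to 0$, whereas you run the identity argument uniformly for all $s_*$; and you make explicit the elliptic-regularity step reducing an $H^1$-blow-up to an $L^\infty$-blow-up, which the paper leaves implicit.
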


\begin{proof} It is worth to point out that in this proposition the radius $R$ is a fixed constant and, in particular, it is independent of $s \in (0,1]$. Via the homotopy property of the degree, as in the previous proposition, we only need to check that the uniform boundedness of the solutions of \eqref{simples}. Observe that this does not come directly from Theorem \ref{blowup}. 

Reasoning by contradiction, let $u_n$ be a blowing-up sequence of solutions of the problem:

$$\left\{\begin{array}{ll}
\displaystyle{-\Delta u_n = 2K_n e^{u_n} } \qquad & \text{ in }\D, \\
\displaystyle{\frac{\partial u_n}{\partial \nu}} +2 = 0 \qquad  &\text{ on } \partial \D.
	\end{array}\right.
	$$
where $K_n= K_{s_n}$, and $s_n \in (0,1)$. In view of Theorem \ref{blowup}, the only problematic case is $s_n \to 0$. In this case, $K_n \to 1$.

By Lemma \ref{estima}, a), we have that $K_n e^{u_n} \rightharpoonup 2 \pi \delta_p$ for some $p \in \partial \D$. Moreover, by \eqref{kw1} we conclude that:

$$ 0 = \frac{1}{s_n} \int_{\D} e^{u_n} \, \partial_{\tau} K_n= \int_{\D} e^{u_n} \, \partial_{\tau} K \to 2 \pi \partial_{\tau} K(p).$$

Assume now, for simplicity, that $p=(1,0)$. By \eqref{kw2} and Lemma \ref{estima}, b), we have that:

$$ 0= \frac{1}{s_n} \int_{\D} e^{u_n} \, \nabla K_n \cdot F = \int_{\D} e^{u_n} \, \nabla K \cdot F = 4 \pi \partial_{\nu}K(p) (1-\lambda_n)+ o(1-\lambda_n).  $$

As a consequence, $\nabla K(p)=0$, which contradicts \eqref{assumption2}.

\end{proof}

By Proposition \ref{K casi 1}, we can focus our attention to problem \eqref{simples}. As we shall see later, this problem has several advantages with respect to \eqref{simple}.

\subsection{The variational formulation}

Let us point out that the right variational formulation for problem \eqref{gg} is a bit tricky (see \cite{CruzRuiz}, where the symmetric case is treated). It could be possible, but rather difficult, to perform the Leray-Schauder degree computation for the original problem \eqref{gg}. However, problem \eqref{simple} is much simpler from a variational point of view, and this allows us to prove Theorem \ref{main2} in a less intricate way.

We now interpret our formulation for the special case \eqref{simple}. Observe that if $h=0$ and $K>0$, the map $c$ defined in \eqref{def-c} is just:

$$ c(u)= \log \left ( \frac{2 \pi}{\int_{\D} K e^u} \right ).$$

Moreover, the compact map $T$ can be rewritten as:

\begin{equation} \begin{array}{c} \label{def-T2} T: X \to X,\ T(u)=v,\ \mbox{ where } v \in X \mbox{ solves:} \\   \\
\left\{\begin{array}{ll} -\Delta v = 4 \pi \frac{K e^u}{\int_{\D} K e^u }  \qquad & \text{ in }\D, \\
\displaystyle{\frac{\partial v}{\partial \nu}} +2 = 0 \qquad  &\text{ on } \partial \D.
\end{array}\right. \end{array}
\end{equation}

We observe that, via the identification of $X$ and $X^*$ by the scalar product,

$$ (I - T)(u) = J'(u),$$

where $J: X \to \R$ is defined as:

\begin{equation} \label{J} J(u) = \frac{1}{2} \int_{\D} |\nabla u |^2 + 2 \int_{\partial \D} u - 4 \pi \log \Big ( \int_{\D} K e^u \Big ). \end{equation}

Let us point out that the $J$ is actually defined in $H^1(\D)$ and it is invariant under addition of constants. We will also define $J_s$ as:

\begin{equation} \label{Js} J_s(u) = \frac{1}{2} \int_{\D} |\nabla u |^2 + 2 \int_{\partial \D} u - 4 \pi \log \Big ( \int_{\D} K_s e^u \Big ), \end{equation}
where $K_s$ is defined in \eqref{Ks}.

Let us recall now the Moser-Trudinger inequality for the Neumann case, see for instance \cite[equation (4)]{osgood}, which implies that $J_s$ is bounded from below.

\begin{proposition}\label{lebedev} The following inequality holds:
	\begin{equation} \label{mt}\log \int_{\D} e^u \leq \frac{1}{8\pi} \int_\D | \nabla u |^2 + \frac{1}{2\pi} \int_{\partial \D} u , \ \forall u\in H^1(\Sigma).\end{equation}
\end{proposition}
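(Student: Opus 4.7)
The inequality in Proposition \ref{lebedev} is a classical Moser-Trudinger / Lebedev-Milin type estimate (cited from \cite{osgood}). The plan is to reduce it to two known sharp inequalities via a harmonic decomposition of $u$.

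Both sides of \eqref{mt} transform compatibly under $u \mapsto u + c$, so one may first normalize $u$ by $\int_{\partial \D} u = 0$; the claim then reduces (up to an additive universal constant, essentially $\log \pi$, hidden in the paper's formulation) to
\[
\log \int_\D e^u \; \leq \; \frac{1}{8\pi} \int_\D |\nabla u|^2 \; + \; \log \pi.
\]
I would write $u = \varphi + \psi$, where $\varphi$ is the harmonic extension of the boundary trace $u|_{\partial \D}$ and $\psi = u - \varphi \in H_0^1(\D)$. Since $\int_\D \nabla \varphi \cdot \nabla \psi = 0$, the Dirichlet energy splits as $\int_\D |\nabla u|^2 = \int_\D |\nabla \varphi|^2 + \int_\D |\nabla \psi|^2$, and the mean-value property gives $\varphi(0) = 0$.

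For the zero-boundary piece $\psi$, the classical sharp Moser-Trudinger inequality on $H_0^1(\D)$ yields $\log \int_\D e^{\alpha \psi} \leq \frac{\alpha^2}{16\pi}\int_\D |\nabla \psi|^2 + C_\alpha$. For the harmonic piece, the Poisson integral representation $\varphi(re^{i\theta}) = \sum_n r^{|n|}\hat u(n) e^{in\theta}$, combined with the sharp Moser-Trudinger inequality on $S^1$ (Chang-Marshall), controls $\log \int_\D e^{\alpha \varphi}$ in terms of the harmonic Dirichlet energy $\int_\D |\nabla \varphi|^2 = 2\pi \sum_n |n| |\hat u(n)|^2$ together with the boundary mean. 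Writing $e^u = e^{\varphi} e^{\psi}$ and applying H\"older with suitably chosen conjugate exponents should then recombine the two estimates to give the right-hand side $\frac{1}{8\pi}\int_\D |\nabla u|^2 + \frac{1}{2\pi}\int_{\partial \D} u$.

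The main obstacle is achieving the sharp constant $\frac{1}{8\pi}$: a direct H\"older split tends to lose sharpness, because the harmonic and zero-boundary pieces have different Moser-Trudinger thresholds. A cleaner alternative that avoids this bookkeeping is the Onofri-on-hemisphere approach: identify $\D$ with the upper hemisphere of $\mathbb{S}^2$ via stereographic projection, extend $u$ to $\mathbb{S}^2$ by reflection across the equator, and apply the sharp Onofri inequality; the constant $\frac{1}{16\pi}$ on the sphere becomes $\frac{1}{8\pi}$ on the disk because the symmetrization doubles both the energy and the integral of $e^u$, while the $\frac{1}{2\pi}\int_{\partial \D} u$ term encodes the spherical average along the equatorial circle. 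Either route reproduces the exact statement of \cite[eq.~(4)]{osgood}, which is precisely \eqref{mt}.
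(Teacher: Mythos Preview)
The paper does not prove Proposition~\ref{lebedev} at all: it is stated as a known result and attributed to \cite[equation~(4)]{osgood}, with no argument given. So there is no ``paper's own proof'' to compare against; your proposal already goes beyond what the paper does.

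As for the content of your sketch: it is an honest outline of two standard strategies, but it is not a proof. You correctly identify the main obstruction in the first route --- a na\"{i}ve H\"older split of $e^u = e^{\varphi}e^{\psi}$ loses the sharp constant $\tfrac{1}{8\pi}$, because the interior ($H_0^1$) and boundary (harmonic) pieces live at different Moser--Trudinger thresholds, and you do not explain how to recover it. The second route (reflect to $\mathbb{S}^2$ and invoke Onofri) is indeed the cleanest way this inequality is usually derived, but your description skips the conformal bookkeeping: the flat disk and the round hemisphere differ by a nontrivial conformal factor, and one must check explicitly how $\int_\D e^u$, the Dirichlet energy, and the boundary mean transform under stereographic projection and even reflection. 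The assertion that ``$\tfrac{1}{16\pi}$ becomes $\tfrac{1}{8\pi}$ because symmetrization doubles both sides'' is the right heuristic but needs the computation to be a proof. You also correctly flag the additive constant $\log\pi$ that is missing from the inequality as literally written (take $u\equiv 0$); for the paper's applications only boundedness from below of $J_s$ matters, so this is harmless there, but it would need to be tracked in a genuine proof.
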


However, it is not clear that $J_s$ attains its infimum, since coercivity is lost because of the noncompact effect of the conformal maps of the disk.

\medskip

In the next sections we will study the Leray-Schauder degree of the operator $J_s'$.

\section{The constrained minimization problem}

In order to find critical points of the functional $J_s$ we shall use a constrained minimization approach. For any $a \in \D$, we define:

$$ M_a = \{ u \in X: \int_{\D } e^{u} (x-a)=0\} $$$$= \{ u \in X: \int_{\D } P(u) =a\}, $$
where $P=(P_1,P_2): H^1(\D) \to \R^2$,

\begin{equation} \label{P} P(u)= \frac{\int_{\D} e^u x }{\int_{\D} e^u}. \end{equation}

Observe that also $P$ is invariant under adition of constants.

The boundedness result given in Proposition \ref{K casi 1} implies the following:

\begin{lemma} \label{lemita} There exists $\bar{r} \in (0,1]$ such that for any $s \in (0,1)$ and any solution $u$ of \eqref{simples}, we have $$|P(u)| < \bar{r}.$$
\end{lemma}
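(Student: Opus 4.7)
The plan is to argue by contradiction, combining the uniform $H^1$ bound on solutions from Proposition \ref{K casi 1} with the compactness afforded by the Moser-Trudinger inequality \eqref{mt}.

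Suppose, towards a contradiction, that there exist $s_n \in (0,1)$ and solutions $u_n$ of \eqref{simples} (with $K_{s_n}$ in place of $K_s$) such that $|P(u_n)| \to 1$. Since $P$ and $\|\cdot\|$ are both invariant under the addition of constants, I may replace $u_n$ by $u_n - \frac{1}{|\D|}\int_{\D} u_n$ and thereby assume $u_n \in X$. Proposition \ref{K casi 1} then supplies a uniform bound $\|u_n\| \leq R$, so along a subsequence $u_n \rightharpoonup u_{\infty}$ weakly in $X$; Rellich's theorem yields strong convergence $u_n \to u_{\infty}$ in every $L^q(\D)$ with $q < \infty$, together with pointwise a.e.\ convergence after a further subsequence.

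Applying Proposition \ref{lebedev} to $t u_n$ for an arbitrary $t > 1$ shows that $e^{u_n}$ is bounded in $L^t(\D)$ for every finite $t$; combined with the pointwise a.e.\ convergence, Vitali's theorem then gives $e^{u_n} \to e^{u_{\infty}}$ in $L^1(\D)$ and, analogously, $x\, e^{u_n} \to x\, e^{u_{\infty}}$ in $L^1(\D;\R^2)$. Hence $P(u_n) \to P(u_{\infty})$. Since $u_{\infty} \in H^1(\D)$, the function $e^{u_{\infty}}$ is strictly positive a.e.\ and integrable on $\D$, so $d\mu := e^{u_{\infty}}\,dx / \int_{\D} e^{u_{\infty}}$ is an absolutely continuous probability measure on $\D$. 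For any $p \in \partial \D$ the linear map $x \mapsto p \cdot x$ attains its maximum value $1$ on $\overline{\D}$ only at $x = p$, while $\mu(\{p\}) = 0$; hence $p \cdot P(u_{\infty}) = \int_{\D} (p \cdot x)\,d\mu(x) < 1$. Taking $p = P(u_{\infty})/|P(u_{\infty})|$ (if the latter is nonzero) yields $|P(u_{\infty})| < 1$, contradicting $|P(u_n)| \to 1$.

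The only delicate point is the $L^1$ convergence of $e^{u_n}$: it is precisely the Moser-Trudinger estimate that rules out any concentration of the exponential mass at a boundary point, which would otherwise be the mechanism by which $P(u_n)$ could escape to $\partial \D$. Once this is in hand, the remainder is a routine weak/strong compactness chase.
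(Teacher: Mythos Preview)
Your argument is correct and relies on exactly the same two ingredients as the paper: the uniform Dirichlet-energy bound from Proposition~\ref{K casi 1} and the $L^p$ control of $e^{u}$ coming from Moser--Trudinger. The paper, however, proceeds by a direct quantitative estimate rather than by contradiction: it observes that $1-|P(u)| \geq \int_{\D} e^{u}(1-|x|)\big/\int_{\D} e^{u}$, uses \eqref{mass} to control the denominator, and then applies H\"older on a thin annulus $\D\setminus D_r$ (with the $L^p$ bound on $e^u$) to force $\int_{D_r} e^u \geq c>0$, whence the numerator is bounded below by $c(1-r)$. Your compactness/Vitali route reaches the same conclusion from the same inputs; the difference is only in the packaging, with the paper's version being slightly more explicit about the constant $\bar r$.
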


\begin{proof} 

By the definition of $P$, we have:

$$ 1 - |P(u)| \geq \frac{\int_{\D} e^u (1-|x|)}{\int_{\D} e^u}.$$

By \eqref{mass}, it suffices to show:

\begin{equation} \label{frombelow} \int_{\D} e^u (1-|x|) \geq \delta >0 \end{equation}

By Proposition \ref{K casi 1}, we have that $\|u\| \leq R$. By inequality \eqref{mt} applied to $p\,  u$, we conclude that $\| e^u \|_{L^p} \leq C$ for any $p>1$. For $r \in (0,1)$ define the disk: $D_r=D(0,r)$. We can estimate, by using Holder inequality,

$$ \int_{D_r} e^u = \int_{\D} e^u  - \ \int_{\D \setminus D_r} e^u \geq \int_{\D} e^u -  \left ( \int_{\D \setminus D_r} e^{2u}   \right )^{1/2} (2 \pi (1-r))^{1/2} $$$$\geq  \int_{\D} e^u  -\ C (1-r)^{1/2}.$$
For a fixed $r \in (0,1)$ sufficiently close to $1$, and using again \eqref{mass}, we can make $ \int_{\D} e^u  -\ C (1-r)^{1/2} > c>0$. Then we conclude

$$ \int_{\D} e^u (1-|x|) \geq \int_{D_r} e^u (1-|x|) \geq c (1-r) >0,$$

which finishes the proof.

\end{proof}

\subsection{The existence of constrained minima} In this subsection we will prove the following proposition.

\begin{proposition} \label{minimo} For any $s \in [0,1]$, $a \in \D$, the functional $ J_s|_{M_a}$ is coercive and hence achieves its minimum.	
\end{proposition}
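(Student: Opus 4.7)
The strategy is a proof by contradiction, exploiting the fact that the barycenter constraint $P(u) = a \in \D$ rules out concentration of $e^{u}$ at boundary points of $\D$ --- which is precisely where the Moser-Trudinger inequality (Proposition \ref{lebedev}) is sharp. Suppose $\{u_n\} \subset M_a$ satisfies $J_s(u_n) \leq C$ with $\|u_n\| \to \infty$. By Proposition \ref{lebedev} one has $J_s \geq -4\pi\log(\max K_s)$ on all of $X$, so the MT inequality must be asymptotically saturated along $u_n$. A concentration-compactness argument applied to the probability measures $\mu_n := e^{u_n}\,dx / \int_\D e^{u_n}$ then yields, up to a subsequence, either (i) single-point concentration $\mu_n \weakto \delta_p$ with $p \in \oD$, or (ii) the mass of $\mu_n$ distributes over at least two regions separated by positive distance.

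In case (ii), an Aubin-type ``two-region'' improved Moser-Trudinger inequality yields a strictly smaller constant than $1/(8\pi)$ in the leading quadratic term, which already forces $J_s(u_n) \to +\infty$, contradicting boundedness. In case (i), combining $P(u_n) = a$ with $P(u_n) = \int x\,d\mu_n \to p$ gives $p = a$, an interior point at distance $d := 1 - |a| > 0$ from $\partial \D$. Localizing with a cutoff $\eta \in C_c^\infty(B_{2r}(a))$ for $r < d/2$, and applying Moser's original $H^1_0$-version of MT (with the sharper constant $1/(16\pi)$) to a suitable truncation of $u_n$, one obtains
\[
\log \int_\D e^{u_n} \leq \Big(\tfrac{1}{8\pi}-\varepsilon\Big)\|u_n\|^2 + \tfrac{1}{2\pi}\int_{\partial\D} u_n + O(1)
\]
for some $\varepsilon = \varepsilon(a) > 0$. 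Substituting this into $J_s(u_n)$ and bounding $|\int_{\partial \D} u_n| = O(\|u_n\|)$ by the trace inequality, we get $J_s(u_n) \geq 4\pi\varepsilon\|u_n\|^2 - O(\|u_n\|) \to +\infty$, again a contradiction. Hence $J_s|_{M_a}$ is coercive.

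The existence of a minimizer then follows by the direct method. A minimizing sequence is bounded in $X$ by coercivity; a weakly convergent subsequence $u_n \weakto u_\star$ passes to the limit in each term of $J_s$ (the quadratic part by weak lower semicontinuity of the norm, the boundary integral by compactness of the trace $H^1(\D) \hookrightarrow L^2(\partial \D)$, and the logarithmic term by $L^1$-convergence of $e^{u_n}$, itself a consequence of Proposition \ref{lebedev} combined with the compactness of $H^1(\D) \hookrightarrow L^q(\D)$ for every finite $q$); the same $L^1$-convergence implies $P(u_n) \to P(u_\star) = a$, so $u_\star \in M_a$ attains the minimum. The main obstacle I foresee is case (i): controlling the cross terms in $\|\nabla(\eta u_n)\|^2$ precisely enough to extract the genuine $1/(16\pi)$ constant requires exploiting the Liouville-bubble profile of the concentrating sequence (or invoking an off-the-shelf improved MT inequality from the literature), and one must ensure that $\varepsilon$ can be chosen uniformly over all sequences in $M_a$ concentrating near $a$.
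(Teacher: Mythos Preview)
Your proposal is correct and follows essentially the same route as the paper: pass to the weak limit of the probability measures $e^{u_n}/\int_\D e^{u_n}$, distinguish single-point concentration (which the constraint $P(u_n)=a$ forces to occur at the interior point $a$) from the case of spread support, and conclude coercivity via an improved Moser--Trudinger inequality. The only difference is that the paper packages both cases into a single Chen--Li type lemma (Lemma~\ref{chen-li}), whose hypothesis \eqref{ChLi2} --- a fixed fraction of mass in a set at positive distance from $\partial\D$ --- already covers your case (i) and sidesteps the cutoff/cross-term issue you flag.
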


As commented previously, the functional $J_s$ is bounded from below but is not coercive due to the noncompact action of the conformal group of the disk. However the constraint $P(u)=a$ allows us to recover coercivity and the existence of minimum can be shown as follows. This idea dates back to \cite{Aubin}, and can be seen as a consequence of the so-called Chen-Li type inequalities (see \cite{chen-li} for more details). These are improvements of the inequality \eqref{mt} under assumptions on how $e^u$ is distributed in $\D$. 

\begin{lemma} \label{chen-li}
		Let $\e>0$, $\delta>0$ and $0<\gamma<1/2$. Let $\Sigma_1$, $\Sigma_2$ and $S$ be subsets of $\D$, such that $d(\Sigma_1, \Sigma_2) > \delta$ and $d(S, \partial \D) > \delta$. If
		
		\begin{equation}\label{ChLi1}
		\frac{ \int_{\Sigma_1} e^u}{\int_{\Sigma} e^u } \geq \gamma, \hspace{0.5cm} \frac{ \int_{\Sigma_2} e^u}{\int_{\Sigma} e^u } \geq \gamma,
		\end{equation}
		or
		
		\begin{equation}\label{ChLi2}
		\frac{ \int_{S} e^u}{\int_{\Sigma} e^u } \geq \gamma,
		\end{equation}
		then, there exists a constant $C=C(\e,\delta,\gamma)$, such that for all $u \in X$,
		\begin{equation} \label{improved} \log \int_{\Sigma} e^u \leq  \ \frac{}{(16\pi-\e)} \int_{\Sigma} \left|\nabla u\right|^2 + C. \end{equation}

\end{lemma}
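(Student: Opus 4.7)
The plan is to localize $u$ to the region where $e^u$ has mass via smooth cutoffs and then apply the Moser--Trudinger inequality \eqref{mt} to the localized function. The source of the improvement from $\frac{1}{8\pi}$ to (almost) $\frac{1}{16\pi}$ is the classical fact that any $v \in H^1(\D)$ with $\supp v \Subset \D$ (hence zero boundary trace) satisfies the planar Moser inequality
\[
\log \int_{\D} e^v \le \frac{1}{16\pi}\int_{\D}|\nabla v|^2 + C,
\]
with half the Neumann coefficient. Geometrically this reflects that interior concentrations are full bubbles while boundary concentrations are half bubbles, so either splitting the mass between two disjoint pieces or pushing it off the boundary effectively doubles the denominator in front of the Dirichlet energy.

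For case \eqref{ChLi2} I would pick $\eta \in C_c^\infty(\D)$ with $\eta \equiv 1$ on $S$, $\supp \eta \subset \{\dist(x,\partial \D) > \delta/2\}$, and $|\nabla \eta| \le C_\delta$. Let $m$ denote the average of $u$ over $A := \supp(\nabla \eta)$ and set $v = \eta\,(u - m)$, which has compact support in $\D$. Expanding $|\nabla v|^2 = \eta^2 |\nabla u|^2 + 2\eta(u-m)\nabla \eta\cdot\nabla u + (u-m)^2|\nabla \eta|^2$ and applying Young's inequality with a small parameter $\kappa > 0$ together with the Poincar\'e inequality on the Lipschitz annulus $A$ (which gives $\int_A (u-m)^2 \le C \int_A|\nabla u|^2$) will yield $\int |\nabla v|^2 \le (1+\kappa)\int_\D |\nabla u|^2 + C$. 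Combining with $\int_\D e^u \le \gamma^{-1}\int_S e^u = \gamma^{-1} e^m \int_S e^v$, bounding $m$ via Poincar\'e using $\int_\D u = 0$, and choosing $\kappa$ so small that $(1+\kappa)/16\pi < 1/(16\pi - \e)$, produces \eqref{improved}.

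For case \eqref{ChLi1} I would use $d(\Sigma_1,\Sigma_2) > \delta$ to pick two cutoffs $\eta_1,\eta_2 \in C^\infty(\oD)$ with disjoint supports and $\eta_i \equiv 1$ on $\Sigma_i$. From the hypothesis one gets
\[
\log \int_\D e^u \le \tfrac12 \log \int_{\Sigma_1} e^u + \tfrac12 \log \int_{\Sigma_2} e^u - \log \gamma,
\]
so it is enough to bound each $\log \int_{\Sigma_i} e^u$ by $\frac{1}{8\pi}\int_{\supp \eta_i}|\nabla u|^2$ plus lower order terms, via \eqref{mt} applied to $v_i := \eta_i(u - m_i)$, and then sum. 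Disjointness of the supports makes the two gradient contributions add up without overlap, producing the leading coefficient $\frac{1}{2}\cdot\frac{1}{8\pi} = \frac{1}{16\pi}$ in front of $\int_\D|\nabla u|^2$, with a $(1+\kappa)$ factor absorbed into $1/(16\pi - \e)$. The hardest part will be controlling the lower-order residues from the cutoffs---namely the $(u-m)^2|\nabla \eta|^2$ terms and, in case \eqref{ChLi1}, the boundary traces $\int_{\partial \D}\eta_i (u-m_i)$ appearing in the Neumann-MT formula---which requires a careful use of Poincar\'e and trace inequalities on the annular regions, exploiting the zero-mean constraint $u \in X$.
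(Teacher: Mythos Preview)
The paper does not actually prove this lemma: immediately after the statement it says ``This lemma was first stated in \cite{Wang}; the reader can find a detailed proof in \cite[Lemma 2.4]{lsr}'' and moves on. Your outline---localize by smooth cutoffs, apply the interior Moser inequality with coefficient $1/16\pi$ in case \eqref{ChLi2} and the Neumann inequality \eqref{mt} on two disjoint pieces in case \eqref{ChLi1}, then control the cutoff residues via Poincar\'e and trace estimates on the transition annuli using the zero-mean constraint---is precisely the standard Chen--Li argument carried out in those references, so your approach is correct and coincides with what the paper invokes.
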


This lemma was first stated in \cite{Wang}; the reader can find a detailed proof in \cite[Lemma 2.4]{lsr}.

\begin{proof}[Proof of Proposition \ref{minimo}]
	
Take $u_n \in M_a$ with $\|u_n\| \to +\infty$. Up to a subsequence, we can assume that
$$ \frac{e^{u_n}}{\int_{\D} e^{u_n}} \rightharpoonup \mu,$$
where $\mu$ is a positive measure, $\mu(\overline{\D})=1$ and the above convergence is the weak convergence of measures. In particular, since $u_n \in M_a$,

\begin{equation} \label{mevoy} \int_{\D} (x-a) \, d \mu =0. \end{equation}

We denote by $\G$ the support of $\mu$, and consider two cases:

\begin{enumerate}
	\item $\G=\{p\} \subset \overline{D}$. In such case, $\mu = \delta_p$. By \eqref{mevoy}, we have necessarily that $p=a$.
	\item $\G$ contains at least two different points $p_1$, $p_2$.
\end{enumerate}
	
In the first case, we define $S\subset \overline{S}\subset \D$ an open set containing $a$. In the second case, we take $p_i \in \Sigma_i \subset \overline{D}$ with $\overline{S}_1 \cap \overline{S}_2 = \emptyset$. By the weak convergence of measures, we have that there exists $\e>0$, $\delta >0$ and $\gamma < 1/2$ such that either \eqref{ChLi1} or \eqref{ChLi2} holds for $u_n$. 

We now recall \eqref{Js} and get that:

$$ J_s(u) \geq \frac{1}{2} \int_{\D} |\nabla u |^2 + 2 \int_{\partial \D} u - 4 \pi \log \Big ( \int_{\D} e^u \Big ) -C.$$
	
Observe now that for any $u \in X$, 

$$ \int_{\partial \D} |u| \leq C \|u\| \leq C + \e \|u\|^2.$$

We now plug estimate \eqref{improved} to the above estimates and conclude that

$$ J_s(u_n) \geq c \int_{\D} |\nabla u_n|^2 - C \to +\infty,$$
which yields coercivity. 

Clearly, the functional $J_s$ is weak lower semicontinuous, and the set $M_a$ is weakly closed. Then the infimum 

$$ inf J_s|_{M_a}$$
is attained.

\end{proof}

\medskip In particular, the minimizer $u_a$ given above is a constrained critical point of $J_s|_{M_a}$, that is, there exists $\tilde{\mu}_i \in \R^2$ such that $J_s'(u_a)= \sum_{i=1}^2 \tilde{\mu}_i P_i'(u_a)$; in other words, for any $v \in H^1(\D)$,

\begin{equation} \label{preprelagrange} \int_{\D} \nabla u_a \cdot \nabla v + 2 \int_{\partial \D} v - 4 \pi \frac{\int_{\D} K_s e^{u_a}v}{\int_{\D} K_s e^{u_a}} = \frac{\int_{\D } e^{u_a} v \  \tilde{\mu} \cdot (x-a)}{\int_{\D} e^{u_a}}. \end{equation}

That is, $u_a$ is a weak solution of the problem:

\begin{equation} \label{prelagrange} \left\{\begin{array}{ll}
\displaystyle{-\Delta u_a = 4 \pi \frac{K e^{u_a}}{\int_{\D} K e^{u_a}} + \frac{e^{u_a} (x-a) \cdot \tilde{\mu} }{\int_{\D}e^{u_a}}} \qquad & \text{ in }\D, \\
\displaystyle{\frac{\partial u_a}{\partial \nu} +2 =  0} \qquad  &\text{ on } \partial \D,
\end{array}\right.
\end{equation}

If we define $w_a=u_a + \log \frac{2 \pi}{\int_{\D} K e^{u_a}}$, then,
	
\begin{equation} \label{lagrange} \left\{\begin{array}{ll}
\displaystyle{-\Delta w_a = 2 \Big (  K +\mu \cdot (x-a) \Big ) e^{w_a} } \qquad & \text{ in }\D, \\
\displaystyle{\frac{\partial w_a}{\partial \nu}} +2 =  0 \qquad  &\text{ on } \partial \D,
\end{array}\right. \ P(w_a) = a.
\end{equation}

with \begin{equation} \label{muu} \mu = \frac{1}{2 \int_{\D} e^{w_a}  } \tilde{\mu}, \ \mbox{ and } \int_{\D} e^{w_a} \in \Big (\frac{2\pi}{max \ K}, \frac{2 \pi}{min \ K} \Big ).\end{equation}


In general, there is no reason to expect that the minimum $min\, J_s|_{M_a}$ is attained at a unique point. We will show next that if $s=0$ the minimizer is unique and explicit. Via a perturbation argument, we will show that this is the case also if $s$ is small.

\subsection{The limit case $s=0$}

\begin{proposition}\label{s=0} For any $a \in \D$ define:
	
	\begin{equation} \label{ua} \varphi_a= \psi_a - \frac{1}{\pi} \int_{\D} \psi_a(z) \, dz,  \ \ \psi_a(z)= 2\log \left(\frac{2 (1-|a|^2)}{|1 - \overline{a} x|^2 + |x-a|^2}\right).  \end{equation}
	
	\begin{enumerate}
		\item[i)] $\varphi_a$ is the unique constrained critical point of $J_0$ in $M_a$,
		\item[ii)] $\varphi_a$ is nondegenerate, in the sense that the kernel of $J_0''(u_a)$ is a two-dimensional space;
		\item[iii)] The set $Z_0= \{\varphi_a:\ a \in \D  \}$ is a smooth $2$-manifold in $X$.
		\item[iv)] $Z_0$ and $M_a$ are transversal at $\varphi_a$, that is, $(T_{\varphi_a}M_a) \oplus (T_{\varphi_a}Z_0) = X$.
		\item[v)] $J_0(\varphi_a)= min \, J_0$.
		
	\end{enumerate}
	
\end{proposition}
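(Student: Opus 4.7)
The plan begins with the explicit Möbius description: $\psi_a$ is the conformal pullback of the standard bubble $\psi_0(x) = 2\log\bigl(2/(1+|x|^2)\bigr)$ by the Möbius automorphism $\phi_a(x) = (x - a)/(1 - \bar{a} x)$, that is $\psi_a(x) = \psi_0(\phi_a(x)) + 2\log|\phi_a'(x)|$. A direct computation for $\psi_0$ verifies $-\Delta \psi_0 = 2 e^{\psi_0}$ in $\D$ with $\partial_\nu \psi_0 = -2$ on $\partial \D$, $\int_\D e^{\psi_0} = 2\pi$, and $P(\psi_0) = 0$; conformal covariance of the Liouville equation transfers these to $\psi_a$, giving in particular $\int_\D e^{\psi_a} = 2\pi$ and $P(\psi_a) = a$. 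Since $\varphi_a$ differs from $\psi_a$ only by an additive constant, $\varphi_a \in X \cap M_a$ and $J_0(\varphi_a) = J_0(\psi_a)$ by shift-invariance of $J_0$.

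For statement (v) and the uniqueness of the minimizer in (i), I would apply the sharp Moser--Trudinger inequality of Proposition~\ref{lebedev}: after multiplication by $4\pi$ it gives $J_0(u) \geq \min J_0$ for every $u \in X$, with equality exactly on the conformal orbit of $\psi_0$. In $X$ this orbit is precisely $Z_0 = \{\varphi_b : b \in \D\}$, so $J_0(\varphi_a) = \min J_0$, proving (v); the map $Z_0 \to \D$, $\varphi_b \mapsto P(\varphi_b) = b$, is a bijection, so $Z_0 \cap M_a = \{\varphi_a\}$ and the minimizer of $J_0|_{M_a}$ is unique.

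Claims (iii) and (iv) follow from the smooth parametrization $a \mapsto \varphi_a$ and the identity $dP_{\varphi_a} \circ \partial_{a_i}\varphi_a = e_i$ obtained by differentiating $P(\varphi_a) = a$: this makes $Z_0$ a smooth $2$-manifold (iii), and $dP_{\varphi_a}|_{T_{\varphi_a} Z_0}$ an isomorphism onto $\R^2$, so $T_{\varphi_a} Z_0 \cap T_{\varphi_a} M_a = \{0\}$ and a dimension count gives the direct sum in (iv). For (ii), the condition $J_0''(\varphi_a)[v, \cdot] = 0$ on $X$ reduces to the linear PDE $-\Delta v = 2 e^{\psi_a} v - \frac{1}{\pi} e^{\psi_a} \int_\D e^{\psi_a} v$ with Neumann boundary condition and $\int_\D v = 0$; an explicit analysis at $a = 0$ (and extension to general $a$ by conformal covariance) identifies the kernel with $\Span(\partial_{a_1}\varphi_a, \partial_{a_2}\varphi_a)$, the third potential kernel element coming from the dilation subgroup of $\mathrm{Aut}(\D)$ being ruled out by the Neumann boundary condition.

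The main obstacle is the rest of (i): uniqueness of the constrained critical point, not merely of the minimizer. Any such critical point, after an additive constant, satisfies the Lagrange equation \eqref{lagrange} with $K \equiv 1$, namely $-\Delta w = 2(1 + \mu \cdot (x - a)) e^w$ in $\D$, $\partial_\nu w = -2$, $P(w) = a$. Identity \eqref{kw1} applied with $\nabla \tilde K \equiv \mu$ gives $\mu \cdot \tau(a) = 0$, hence $\mu \parallel a$ when $a \neq 0$. Using the rotational symmetry of the $K = 1$ problem to place $a$ (if $a \neq 0$) or $\mu$ (if $a = 0$) on the positive real axis, identity \eqref{kw2} then reduces to $\mu_1 \int_\D (1 - x_1^2 + x_2^2) e^w = 0$; since the integrand is strictly positive almost everywhere in $\D$, this forces $\mu = 0$. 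So $w$ is an unconstrained critical point of $J_0$, and by the classification of solutions of $-\Delta u = 2 e^u$ on $\D$ with $\partial_\nu u = -2$ we get $w = \psi_b$ for some $b \in \D$; the constraint $P(w) = a$ forces $b = a$, yielding $u = \varphi_a$.
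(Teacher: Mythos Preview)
Your overall route matches the paper's: kill the Lagrange multiplier via the Kazdan--Warner identity \eqref{kw2}, then invoke the classification of solutions of \eqref{eq: s=0}; linearize for (ii); differentiate $P(\varphi_a)=a$ for (iv). Two points deserve comment.

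\medskip

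\textbf{A genuine gap.} The step ``conformal covariance of the Liouville equation transfers \dots\ $P(\psi_0)=0$ to $P(\psi_a)=a$'' is not justified as written. Conformal covariance gives you the equation, the boundary condition, and (by change of variables) $\int_\D e^{\psi_a}=2\pi$, but the center of mass $P$ is \emph{not} conformally covariant: under $y=\phi_a(x)$ one gets
\[
\int_\D x\, e^{\psi_a(x)}\,dx=\int_\D \phi_a^{-1}(y)\, e^{\psi_0(y)}\,dy,
\]
and there is no a priori reason the right-hand side equals $2\pi a$. The paper proves $P(\psi_a)=a$ by an explicit (Mathematica-assisted) computation in polar coordinates. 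If you want a conceptual substitute, note that $\phi_a^{-1}(y)=(y+a)/(1+\bar a y)$ is holomorphic in $y$ with $\phi_a^{-1}(0)=a$, and $e^{\psi_0(y)}=4/(1+|y|^2)^2$ is radial; expanding $\phi_a^{-1}$ in powers of $y$ and integrating in the angular variable kills every term except the constant one, giving $\int_\D \phi_a^{-1}(y)e^{\psi_0(y)}\,dy=a\int_\D e^{\psi_0}=2\pi a$. Either way, this must be argued --- it is exactly what pins down $a'=a$ at the end of (i), so the uniqueness claim collapses without it.

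\medskip

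\textbf{A difference in (v).} You deduce (v) from the characterization of extremals in the Moser--Trudinger inequality, which is a known Onofri-type result but is not contained in Proposition~\ref{lebedev} as stated. The paper instead argues indirectly: by Proposition~\ref{minimo} the constrained infimum is attained, by (i) the only constrained critical point is $\varphi_a$, hence $J_0(\varphi_a)=\min J_0|_{M_a}$; since $Z_0$ is a connected manifold of critical points, $J_0$ is constant on it, so this common value is $\min J_0$. Your route is shorter provided the extremal characterization is available; the paper's route is self-contained.

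\medskip

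A minor remark on (i): your preliminary use of \eqref{kw1} to get $\mu\parallel a$ is correct but unnecessary. The paper simply rotates so that $\mu=(\mu_1,0)$ and applies \eqref{kw2} directly, which already forces $\mu_1=0$ regardless of where $a$ sits.
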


\begin{proof} Let $u$ be a constrained critical point of $J_0$ in $M_a$, and denote $w= u + \log \frac{2 \pi}{\int_{\D} e^u }$. By making a suitable rotation we can assume that $ \mu = (\mu_1,0)$. By \eqref{kw2}, we have that:

\begin{equation} \label{prearriba} \mu_{1} \int_{\D} e^{w}   (1-x_1^2+x_2^2) =0.\end{equation}

As a consequence, $\mu_1=0$, and $w$ is a solution of the problem:
	
	\begin{equation} \label{eq: s=0} \left\{\begin{array}{ll}
	\displaystyle{-\Delta w = 2  e^{w} } \qquad & \text{ in }\D, \\
	\displaystyle{\frac{\partial u}{\partial \nu}} +2 = 0 \qquad  &\text{ on } \partial \D.
	\end{array}\right.
	\end{equation}
	
 Those solutions have been classified (see for instance \cite[Lemma 2.1]{jlmr}), and are given by the expression $\psi_{a'}$ in \eqref{ua}. A priori, $a' \in \D$ could be different from $a$, but we now claim that:
	
	$$0 = \int_{\D} e^{{\varphi_a}} (x-a) \, dx = \int_{\D} e^{{\psi_a}} (x-a) \, dx,$$
	
	so that $\varphi_a \in M_a$ and therefore $a'=a$ and $u=\varphi_a$. 
	
	\medskip In order to prove the claim, we observe that:
	
	\begin{equation} \label{dopo} \int_{\D} e^{{\psi_a}} \, dx = 2 \pi.\end{equation}

	We now compute $\int_{\D} e^{{\psi_a}} x \, dx$. Writing $a= a_0 e^{i\theta}$, with $a_0 >0$, $\theta \in \R$, we have:
	
	$$ \int_{\D} e^{{\psi_a}} x \, dx =\int_{\D} \frac{4 (1-a_0^2)^2}{(|1 - a_0 e^{-i \theta} x|^2 + |x-a_0 e^{i\theta}|^2)^2} x \, dx $$$$= e^{i \theta} \int_{\D} \frac{4 (1-a_0^2)^2}{(|1 - a_0 x'|^2 + |x'-a_0|^2)^2} x' \, dx',$$
	by the change of variables $x' = e^{-i \theta} x$ (with abuse of notation, we will write again $x$ for the new variable). By symmetry, it is clear that the expression:
	
	$$\int_{\D} \frac{4 (1-a_0^2)^2}{(|1 - a_0 x|^2 + |x-a_0|^2)^2} x \, dx$$
	is a real number. If we write $x= x_1 + i x_2$ and use polar coordinates, we have:
	
	$$\int_{\D} \frac{4 (1-a_0^2)^2 x_1}{(|1 - a_0 x|^2 + |x-a_0|^2)^2} \, dx $$$$=  \int_0^1\int_0^{2\pi} \frac{4 (1-a_0^2)^2 \, r^2 \cos t}{(1-a_0 r \cos t)^2 + a_0^2 r^2 \sin^2 t + (r \cos t - a_0)^2 + r^2 \sin^2t} \, dr \,  dt $$ $$ = 4(1-a_0^2)^2 \int_0^1 \int_0^{2\pi} \frac{r^2 \cos t}{(1+a_0^2 + r^2 + a_0^2 r^2 - 4 a_0 r \cos t)^2} \, dr \, dt. $$
	An elementary but lengthy computation (we used the software Mathematica here) shows that the above espression is equal to $ 2 \pi a_0$.
	Then, 
	
	$$ \int_{\D} \frac{4 (1-|a|^2)^2}{(|1 - \overline{a} x|^2 + |x-a|^2)^2} z \, dz = e^{i \theta} 2 \pi a_0 = 2 \pi a.$$ 
	
	This, together with \eqref{dopo}, finishes the proof of i).
	
	\medskip We now prove ii). In order to do that, we will characterize the solutions $\phi \in H^1(\D)$ of the linearized problem:
	
	$$ \left\{\begin{array}{ll}
	\displaystyle{-\Delta \phi = 4 \pi \frac{e^{\varphi_a} \phi \int_{\D}e^{\varphi_a}  - e^{\varphi_a} \int_{\D}e^{\varphi_a} \phi }{(\int_{\D} e^{\varphi_a})^2}} \qquad & \text{ in }\D, \\
	\displaystyle{\frac{\partial \phi}{\partial \nu}} =  0 \qquad  &\text{ on } \partial \D.
	\end{array}\right.
	$$
	
	By the invariance of additive constants, we can replace $\varphi_a$ by $\psi_a$ in the equation above, that reads:
	
	$$ \left\{\begin{array}{ll}
	\displaystyle{-\Delta \phi = 2e^{\psi_a} \phi - \frac{e^{\psi_a}}{\pi } \int_{\D} e^{\psi_a}} \phi \qquad & \text{ in }\D, \\
	\displaystyle{\frac{\partial \phi}{\partial \nu}} = 0 \qquad  &\text{ on } \partial \D.
	\end{array}\right.
	$$
	
	Let us point out that $\phi=1$ is a solution (not belonging to $X$) of the above equation. Given any nonconstant solution $\phi \in H^1(\D)$ we can assume, by substracting a suitable constant, that 
	
	$$ \int_{\D} e^{\psi_a} \phi =0.$$
	
	As a consequence, $\phi$ solves the equation:
	
	$$ \left\{\begin{array}{ll}
	-\Delta \phi = 2e^{\psi_a} \phi \qquad & \text{ in }\D, \\
	\displaystyle{\frac{\partial \phi}{\partial \nu}} = 0 \qquad  &\text{ on } \partial \D.
	\end{array}\right.
	$$
	
	The solutions of the above equation have been found in \cite[Lemma 2.3]{jlmr}, and they form a two dimensional space. Indeed in such lemma only the case $a=0$ is stated, but the same argument works for any $a \in \D$.
	
	In sum, the kernel $J_0''(u_a)$ in $H^1(\D)$ has dimension $3$, one of its generators being the constant function $1$. Then, when restricted to $X$, the kernel has dimension 2.

	\medskip The proof of iii) is immediate. We now prove iv) by showing that $(T_{\varphi_a}M_a) \cap (T_{\varphi_a}Z_0) = \{0\}$. Observe that $v \in T_{\varphi_a}M_a$ if and only if:
	
	$$P'(\varphi_a)(v)=0 \Rightarrow \int_{\D} e^{\varphi_a} (x_i-a_i) v=0, \ i=1,\ 2.$$
	
	However,
	
	$$ \int_{\D} e^{\varphi_a} (x_i-a_i) \partial_{a_i} \varphi_a =  \partial_{a_i} \underbrace{\left ( \int_{\D}  e^{\varphi_a} (x_i-a_i) \right )}_{=0} + \int_{\D} e^{\varphi_a} = 0 + 2\pi \neq 0.$$

	We now prove v). As shown by Proposition \ref{minimo}, $J_0$ achieves its infimum in $M_a$. Since $\varphi_a$ is the unique constrained critical point of $J_0$ in $M_a$, then $J_0(u_a)= min\ J_0|_{M_a}$. Even more, since $Z_0$ is a smooth manifold of critical points, we have that $J_0(u_a)$ is constant in $a$, that is, $J_0(u_a) = min\, J_0$ for all $a \in \D$.

\end{proof}

\subsection{The case $s$ small}

In this subsection we will study problem \eqref{simples} for a small value of $s$, as a perturbation of the case $s=0$. There are some technical issues to do this, though. The basic tool is the uniform bound given in Proposition \ref{K casi 1}.

The main result of this subsection is the following.

\begin{proposition} \label{s(r)} For any $r \in (0,1)$ there exists $s(r)$ such that, for any $s \in (0,s(r))$ and any  $a \in \D$ with $|a| <r$, the following assertions hold true:
	
\begin{enumerate}
	
	\item[a)] The minimizer provided by Proposition \ref{minimo} is unique; we shall denote it by $u_a=u_a^s$, and we have $u_a^s \to \varphi_a$ as $s \to 0$ in $H^1$ sense.
	
	\item[b)] The minimizer $u_a$ is nondegenerate; being more specific, $J_s''(u_a)$ has no kernel and gives rise to a positive definite quadratic form in $T_{u_a}M_a$. As a consequence, the set of minimizers $Z_s(r)=\{ u_a;\ a \in \D, |a|<r,\ \}$ is a smooth 2-manifold. 
	
	\item[c)] The function $w_a= u_a + \log \frac{2\pi}{\int_{\D} K e^{u_a}}$ solves equation \eqref{lagrange} with some $\mu_s(a) \in \R^2$ which is smooth in $a$. Moreover, $|\mu_s(a)|=o_s(1)$ uniformly for $|a| \leq r$.
	
	\item[d)] All solutions of \eqref{simples} belong to $Z_s(\bar{r})$, where $\bar{r}$ is given by Lemma \ref{lemita}.

\end{enumerate}
\end{proposition}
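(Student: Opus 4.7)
The strategy is a perturbation argument from the limit case $s=0$, where Proposition \ref{s=0} provides a complete picture. All estimates below will be made uniform in $a$ on the compact set $\{|a|\leq r\} \subset \D$.

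\textbf{Step 1 (a priori convergence).} I would first show that any minimizer $u_a^s$ of $J_s|_{M_a}$ converges to $\varphi_a$ strongly in $H^1$ as $s\to 0$, uniformly for $|a|\leq r$. Since $\varphi_a \in M_a$ and $K_s=sK+(1-s)$ differ from the limit case by $O(s)$, one has $J_s(u_a^s) \leq J_s(\varphi_a) = J_0(\varphi_a) + O(s)$. The coercivity argument in the proof of Proposition \ref{minimo}, based on the Chen--Li inequality (Lemma \ref{chen-li}), is uniform for $|a|\leq r$, so $\|u_a^s\|$ is uniformly bounded. Along a weakly convergent subsequence with $a_n\to a_\infty$, compactness of $e^u$ in $L^1(\D)$ gives $u^*\in M_{a_\infty}$, and weak lower semicontinuity together with $J_0(\varphi_{a_\infty})=\min J_0|_{M_{a_\infty}}$ force $u^*=\varphi_{a_\infty}$ by Proposition \ref{s=0}(i),(v). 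Convergence of Dirichlet energies then upgrades the convergence to strong in $H^1$.

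\textbf{Step 2 (IFT and uniqueness).} Next, I would apply the implicit function theorem to
$$\Psi(u,\mu,a,s) = \bigl(\, J_s'(u) - \mu_1 P_1'(u) - \mu_2 P_2'(u),\; P(u) - a \,\bigr) \in X^* \times \R^2,$$
whose zeros parametrize constrained critical points of $J_s|_{M_a}$ together with their Lagrange multipliers. One checks $\Psi(\varphi_a, 0, a, 0)=0$ using Proposition \ref{s=0}(i) and the identity \eqref{kw2} to see that the multiplier vanishes at $s=0$. The partial derivative
$$D_{(u,\mu)}\Psi = \begin{pmatrix} J_0''(\varphi_a) & -P'(\varphi_a)^T \\ P'(\varphi_a) & 0 \end{pmatrix}$$
is invertible: by Proposition \ref{s=0}(ii), $\ker J_0''(\varphi_a) = T_{\varphi_a}Z_0$, and by the transversality in (iv) the restriction $P'(\varphi_a):T_{\varphi_a}Z_0 \to \R^2$ is bijective. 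On $\{|a|\leq r\}$ the inverse is uniformly bounded, so the IFT yields a smooth branch $(u_a^s, \mu_s(a))$ of constrained critical points, unique in a uniform neighborhood of $\varphi_a$. Combined with Step 1, this neighborhood captures every minimizer for $s$ small, establishing (a) and the smoothness of $\mu_s(a)$ in (c); continuity of the branch with $\mu_0(a)=0$ yields $|\mu_s(a)|=o_s(1)$ uniformly in $|a|\leq r$ after the normalization \eqref{muu}.

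\textbf{Step 3 (nondegeneracy and (d)).} The Hessian $J_0''(\varphi_a)|_{T_{\varphi_a}M_a}$ is positive definite: semidefiniteness follows from $\varphi_a$ being a minimizer of $J_0|_{M_a}$ (Proposition \ref{s=0}(v)), and any vector in the kernel of the restricted form would lie in $T_{\varphi_a}M_a \cap T_{\varphi_a}Z_0 = \{0\}$ by transversality. By the $H^1$-convergence of Step 1 and continuity of $J_s''$ in $(u,s)$, positive definiteness persists on $T_{u_a^s}M_a$ for $s$ small, giving (b) together with the smooth $2$-manifold structure of $Z_s(r)$. Finally, for (d), apply the above with $r=\bar r$: any solution of \eqref{simples} has $|P(u)|<\bar r$ by Lemma \ref{lemita}, so $u$ is a constrained critical point on $M_{P(u)}$ with zero Lagrange multiplier, and by uniqueness coincides with $u_{P(u)}^s\in Z_s(\bar r)$. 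The main obstacle I anticipate is keeping Step 2 uniform up to $|a|=r$, since the family $\varphi_a$ degenerates as $a\to\partial\D$; this is precisely why the statement is restricted to compact subdisks, and why the a priori control in Step 1 must rely on Chen--Li constants controlled uniformly in $a$.
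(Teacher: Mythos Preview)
Your approach to (a) is correct but genuinely different from the paper's. The paper argues by contradiction with two sequences of constrained critical points, and its key technical step is to bound the Lagrange multipliers $|\mu_n|$ directly: assuming $|\mu_n|\to\infty$, it applies the Kazdan--Warner identity \eqref{kw2} to the equation \eqref{lagrange} to force $e^{w_n}$ to concentrate at $\{(\pm 1,0)\}$, and then the Moser--Trudinger inequality gives $\|u_n\|\to\infty$, contradicting the minimizer property. With $|\mu_n|$ bounded, it invokes the PDE compactness result \cite[Proposition 3.1]{jlmr} to pass to the limit. Your route bypasses all of this: the energy bound $J_s(u_a^s)\leq J_s(\varphi_a)$ together with uniform coercivity from Lemma \ref{chen-li} yields the $H^1$ bound and the convergence to $\varphi_a$ purely variationally, with no PDE blow-up analysis and no a priori control of $\mu$. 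This is cleaner for (a); the paper's route, on the other hand, gives compactness of \emph{all} constrained critical points (not just minimizers), which it reuses below.

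There is, however, a real gap in your treatment of (d). A solution $u$ of \eqref{simples} is a constrained critical point on $M_{P(u)}$ with multiplier zero, but it is \emph{not} known to be a minimizer, so your Step~1 does not apply to it. The uniqueness you obtain from the implicit function theorem in Step~2 is only local, in a neighborhood of $\varphi_a$; you have not shown that an arbitrary solution of \eqref{simples} lies in that neighborhood. The paper fills this by invoking Proposition~\ref{K casi 1}: solutions of \eqref{simples} are uniformly bounded in $s\in(0,1]$, hence compact, and any limit as $s\to 0$ solves \eqref{eq: s=0} and is therefore some $\varphi_a$; with $|a|<\bar r$ from Lemma~\ref{lemita}, the IFT then identifies $u$ with $u_a^s$. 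You should insert exactly this step.
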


\begin{proof}
	
We first prove a) by contradiction; fix $r \in (0,1)$, $s_n \to 0$ and denote by $u_n$, $v_n$ two different constrained critical points of $J_{s_n}|_{M_{a_n}}$, with $|a_n| \leq r$. As in \eqref{lagrange}, defining $w_n=u_n + \log \frac{2 \pi}{\int_{\D} K_{s_n} e^{u_n}}$, we have

\begin{equation} \left\{\begin{array}{ll}
\displaystyle{-\Delta w_n = 2 \Big (  K_{s_n} +\mu_n \cdot (x-a_n) \Big ) e^{w_n} } \qquad & \text{ in }\D, \\
\displaystyle{\frac{\partial w_n}{\partial \nu}} +2 =  0 \qquad  &\text{ on } \partial \D,
\end{array}\right. \ P(w_{a_n}) = a_n.
\end{equation}

Observe also that $K_{s_n} \to 1$, so that

\begin{equation} \label{masa} \int_{\D} e^{w_n} \to 2 \pi.\end{equation}

We first claim that $|\mu_n|$ is bounded. If this claim is true, we can make use of \cite[Proposition 3.1]{jlmr}. Let us point out that such proposition is applicable by \eqref{masa} since, up to a subsequence, we can assume that $K_n= K_{s_n} +\mu_n \cdot (x-a_n)$ converges in $C^2$ sense (even if we do not know anything about its sign). Since $|P(u_n)| \leq r<1$, $u_n$ cannot be a blowing-up sequence, and hence it is uniformly bounded. Passing to a subsequence and using standard estimates we get that $u_n \to \varphi_{a}$, where $a_n \to a$.

We can apply the same argument to the sequence $v_n$, obtaining that $v_n \to \varphi_a$. By Proposition \ref{s=0}, ii),we can use the Implicit Function Theorem to conclude uniqueness. Observe that we also conclude that $u_a^s \to \varphi_a$ as $s \to 0$.

We now prove the claim. Reasoning by contradiction, assume that $|\mu_n| \to +\infty.$ By making a suitable rotation, we can assume that $\mu_n(a) = (\mu_{1,n}, 0)$, with $\mu_{1,n} \to +\infty$. We now use \eqref{kw2} and obtain:

\begin{equation} \label{arriba} \int_{\D} e^{w_n} (\partial_{x_1} K_{s_n} + \mu_{1,n} )(1-x_1^2+x_2^2) - 2 \int_{\D} e^{w_n} x_1 \, x_2 \, \partial_{x_2} K_{s_n}=0.\end{equation}

Clearly, by \eqref{mass},

\begin{equation} \label{one} \int_{\D} e^{w_n} |\partial_{x_1} K_{s_n} | (1-x_1^2+x_2^2) \leq C s_n, \end{equation}

\begin{equation} \label{two} \int_{\D} e^{w_n} |x_1| \, |x_2| \, |\partial_{x_1} K_{s_n}| \leq C s_n. \end{equation}

As a consequence,

$$ \int_{\D} e^{w_n} (1-x_1^2+x_2^2) \to 0.$$ 
The point here is that the function $(1-x_1^2+x_2^2)$ is positive and vanishes only at the points $(\pm 1, 0)$; this implies that:

$$ e^{w_n} \weakto \alpha \delta_p + \beta \delta_q,$$

where $p=(1,0)$, $q=(-1,0)$ and $\alpha$, $\beta$ are positive contants such that
$ \alpha + \beta = 2 \pi$. In particular, $\int_{\D} e^{2w_n} \to +\infty$.
Moreover, by \eqref{masa} and the Jensen inequality,

\begin{equation} \label{average} \limsup \int_{D} w_n \leq \pi \log 2 \end{equation}

By applying the Moser-Trudinger inequality \eqref{mt} to $2w_n$, we conclude that 

$$ \int_{\D} |\nabla u_n|^2  = \int_{\D} |\nabla w_n|^2 \to + \infty.$$  

By Proposition \ref{minimo}, $J_s(u_n) \to +\infty$ and then $u_n$ cannot be a minimizer. The claim is proved.

\medskip 

The proof of b), c) follows from Proposition \ref{s=0}, ii) and iv), as well as the Implicit Function Theorem.

\medskip We finally prove d). Take $u_n$ a solution of \eqref{simples} for $s_n \to 0$. By Proposition \ref{K casi 1}, $u_n$ is uniformly bounded. By compactness, it converges to a solution for $s=0$, and hence it is equal to a function $\varphi_a$ given in Proposition \eqref{s=0}. By Lemma \ref{lemita}, $|a| \leq \bar{r}$. By the Implicit Function Theorem, $u_n$ coincides with some $u_{a_n}^{s_n}$. 

%
%
%
%
%
%
%
%

\end{proof}

\begin{remark} \label{remark extra} Without loss of generality, we can take the value $s(r)$ given in the previous proposition such that $s(r)$ converges to $0 $ as $r \to 1$. \end{remark}

Once we know that the minimizer is unique, the Lagrange multiplier $\mu_s(a)$ is well defined and continuous. Moreover, as $s$ tends to $0$, also the multiplier tends to $0$. Next proposition gives a sharper asymptotic estimate of the multiplier for small $s$ and $a$ close to $\partial \D$. This will be the key to relate the Leray-Schauder degree of our problem with the Brouwer degree of $\nabla K$.

\begin{proposition} \label{chiave} For any $\e>0$ there exists $r_\e \in (0,1)$ such that the following holds true:
	
\medskip 
For any $r \in (r_\e, 1)$, $s \in (0, s(r))$ (as defined in Proposition \ref{s(r)}) and any $a \in \D$ with $|a| = r$, there holds:
	$$ \left | \frac{\mu_s(a)}{s} + \nabla K(a) \right | < \e,$$
where $\mu_s(a)$ is given in Proposition \ref{s(r)}, c).

\end{proposition}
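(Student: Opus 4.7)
The plan is to argue by contradiction: negating the statement yields $\e_0>0$ and sequences $r_n\to 1$, $s_n\in(0,s(r_n))$, and $a_n$ with $|a_n|=r_n$, satisfying $|\mu_n/s_n+\nabla K(a_n)|\geq\e_0$, where $\mu_n:=\mu_{s_n}(a_n)$. By Remark~\ref{remark extra} we have $s_n\to 0$; passing to a subsequence, $a_n\to p\in\partial\D$, and after a rotation we take $p=(1,0)$. By Proposition~\ref{s(r)} c) together with Remark~\ref{remark extra} (shrinking $s(r)$ as needed), we may further assume $|\mu_n|\to 0$, so that the modified curvature $K_n(x):=K_{s_n}(x)+\mu_n\cdot(x-a_n)$ converges to $1$ in $C^2(\oD)$.

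The first task is to show that the function $w_n:=u_{a_n}^{s_n}+\log\frac{2\pi}{\int_\D K_{s_n}e^{u_{a_n}^{s_n}}}$ is a blowing-up sequence concentrating at $p$. It solves the PDE of \eqref{lagrange} with curvature $K_n$; integrating, and using the constraint $P(w_n)=a_n$ to annihilate the term $\mu_n\cdot\int(x-a_n)e^{w_n}$, we obtain $\int K_{s_n}e^{w_n}=2\pi$, hence $\int e^{w_n}\to 2\pi$. If $\sup w_n$ were bounded, elliptic regularity would extract a smooth limit $w_\infty$ with $P(w_\infty)=p\in\partial\D$, which is impossible since $P$ takes values in the open disk on any smooth function. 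So $w_n$ blows up, and Lemma~\ref{estima} a) combined with $P(w_n)\to p$ yields $e^{w_n}\weakto 2\pi\delta_p$ as measures.

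Next I would exploit the Kazdan--Warner identities of Proposition~\ref{kw} applied to $w_n$ with $\tilde K=K_n$. Since $\nabla K_n=s_n\nabla K+\mu_n$, identity \eqref{kw1} reads $s_n\int_\D e^{w_n}\nabla K\cdot\tau+\mu_n\cdot\int_\D e^{w_n}\tau=0$, and passing to the limit via $e^{w_n}\weakto 2\pi\delta_p$ yields
\[
s_n\,\partial_\tau K(p)+\mu_n\cdot\tau(p)=(s_n+|\mu_n|)\,o(1).
\]
For \eqref{kw2} plain weak convergence is insufficient because $F(p)=0$; here I invoke Lemma~\ref{estima} b) twice, first with $G=K$ and then with the linear test function $G(x)=\mu_n\cdot x$ (for which $\nabla G=\mu_n$ and $\partial_\nu G(p)=\mu_n\cdot p$). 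Combining both expansions, and noting that the denominator $\tilde K(p)$ in Lemma~\ref{estima} b) equals $1$, we arrive at
\[
s_n\,\partial_\nu K(p)+\mu_n\cdot p=(s_n+|\mu_n|)\,o(1).
\]

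Set $\eta_n:=\mu_n/s_n$. If $\eta_n$ stays bounded, dividing both identities by $s_n$ and letting $n\to\infty$ gives $\eta_n\cdot\tau(p)\to-\partial_\tau K(p)$ and $\eta_n\cdot p\to-\partial_\nu K(p)$; since $\{p,\tau(p)\}$ is an orthonormal basis of $\R^2$, $\eta_n\to-\nabla K(p)$, and combined with $a_n\to p$ and continuity of $\nabla K$ this contradicts $|\eta_n+\nabla K(a_n)|\geq\e_0$. If instead $|\eta_n|\to\infty$, dividing both identities by $|\mu_n|$ and extracting $\mu_n/|\mu_n|\to\eta^*$ with $|\eta^*|=1$ gives $\eta^*\cdot\tau(p)=0=\eta^*\cdot p$, whence $\eta^*=0$, a contradiction. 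The main obstacle I expect is the application of Lemma~\ref{estima} b) to the $n$-dependent test function $G(x)=\mu_n\cdot x$: the error $o(1-\lambda_n)$ there must scale linearly in $\|G\|_{C^2}\sim|\mu_n|$, which calls for inspecting the proof in~\cite{jlmr} rather than merely using its statement. A secondary delicate point is the preliminary reduction $|\mu_n|\to 0$, needed so that $K_n\to 1$ in $C^2$ and Lemma~\ref{estima} is applicable.
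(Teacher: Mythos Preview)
Your proof is correct and follows essentially the same route as the paper's: both extract a blowing-up sequence, apply the Kazdan--Warner identities \eqref{kw1}--\eqref{kw2} together with Lemma~\ref{estima} b), and your case split on boundedness of $\eta_n=\mu_n/s_n$ is equivalent to the paper's unified normalization by $\sigma_n=\max\{s_n,|\mu_n|\}$ (with $s_n/\sigma_n\to\tilde s_0$, $\mu_n/\sigma_n\to\tilde\mu_0$). Your self-identified obstacle about the $n$-dependent test function $G(x)=\mu_n\cdot x$ dissolves by linearity: apply Lemma~\ref{estima} b) once with $G=x_1$ and once with $G=x_2$ (fixed functions), then take the linear combination with coefficients $\mu_{n,1},\mu_{n,2}$, so the error term is automatically $|\mu_n|\,o(1-\lambda_n)$---the paper glosses over this point in exactly the same way.
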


\begin{proof} Take $r_n \in (0,1)$, $r_n \to 1$, $s_n \in (0, s_n(r))$,  $a_n \in \D$, $|a_n| = r_n$, and denote by $u_n= u_{a_n}$, $w_n = u_n + \log \frac{2 \pi}{\int_{\D} e^{u_n}}$. Clearly $w_n$ is a solution of the problem 
	
\begin{equation} \label{lagrangen} \left\{\begin{array}{ll}
-\Delta w_n =  2  K_n e^{w_n} \qquad & \text{ in }\D, \\
\displaystyle{\frac{\partial w_n}{\partial \nu}} +2 =  0 \qquad  &\text{ on } \partial \D,
\end{array}\right. ,
\end{equation}
where $K_n(x)=  K_{s_n}(x) + \mu_n \cdot (x-a_n)$.  Observe that by Proposition \ref{s(r)}, c), $K_n(x)> \frac{1}{2}$ for sufficiently large $n$. The proposition is proved if we show that:

\begin{equation} \label{objetivo}  \frac{\mu_n}{s_n} + \nabla K(a_n) \to 0. \end{equation}

Observe that since $|P(w_n)| = |a_n|  \to 1$, $w_n$ is a blowing-up sequence of solutions. Passing to a subsequence, we can assume that $a_n \to p \in \partial \D$. By Remark \ref{remark extra} and Proposition \ref{s(r)}, c), we conclude that $s_n \to 0$ (so that $K_n \to 1$) and $\mu_n \to 0$.

Let us denote $\sigma_n = max \{ s_n, |\mu_n|  \}$. Up to a subsequence we can assume that:

$$ \frac{s_n}{\sigma_n}  \to \tilde{s}_0, \ \ \frac{\mu_n}{\sigma_n}  \to \tilde{\mu}_0,$$

where either $\tilde{s}_0=1$ or $|\tilde{\mu}_0|=1 $ (or both). 

By Theorem \ref{blowup}, we have that $e^{u_n} \rightharpoonup 2 \pi \delta_p$. Moreover, by \eqref{kw1}  we conclude that:

$$ 0 = \frac{1}{\sigma_n} \int_{\D} e^{u_n} \, \partial_{\tau} K_n= \frac{s_n}{\sigma_n}\int_{\D} e^{u_n} \, \partial_{\tau} K +  \int_{\D} e^{u_n} \, \frac{\mu_n \cdot \tau}{\sigma_n}  $$\begin{equation} \label{y uno} \Rightarrow \tilde{s}_0 \, \partial_{\tau} K(p) +  \tilde{\mu}_0 \cdot \tau(p)=0.  \end{equation}

Assume now, for simplicity, that $p=(1,0)$. By \eqref{kw2} we have that: 

$$ 0= \frac{1}{\sigma_n} \int_{\D} e^{u_n} \, \nabla K_n \cdot F =  \frac{s_n}{\sigma_n} \int_{\D} e^{u_n} \, \nabla K \cdot F +  \int_{\D} e^{u_n} \, \frac{1}{\sigma_n} \nabla (\mu_n (x-a_n)) \cdot F. $$

Taking now into account Lemma \ref{estima}, b), we conclude that:

 \begin{equation} \label{y dos} \tilde{s}_0 \partial_{\nu}K(p) +  \tilde{\mu}_0 \cdot \nu(p)=0.  \end{equation}

By \eqref{y uno}, \eqref{y dos}, we conclude that both $\tilde{s_0} \neq 0$, $\tilde{\mu}_0 \neq 0$, and

$$  \nabla K(p) + \frac{1}{\tilde{s}_0} \tilde{\mu}_0=0.$$
 
It suffices to observe that $ \frac{1}{s_n} \mu_n \to \frac{1}{\tilde{s}_0} \tilde{\mu}_0$ to conclude the proof of \eqref{objetivo}.

\end{proof}

\section{Proof of Theorem \ref{main2}} Observe that the Proposition \ref{chiave} readily implies that the map $a \mapsto \mu_s(a)$ has a Brouwer degree which concides with that of $\nabla K$. This already implies the existence of a solution for problem \eqref{simples} for small $s$. However, in order to prove Theorem \ref{main2}, and taking into account Proposition \ref{K casi 1}, we need to compute the associated Leray-Schauder degree. 

Then, more work is needed. A computation in the same spirit is given in \cite[pp. 177-180]{Ji}, based in the previous work \cite{preJi}. Here we give an alternative proof.

\begin{definition} Take $r^* \in (\bar{r},1)$ (where $\bar{r}$ is given in Lemma \ref{lemita}) so that Proposition \ref{chiave} holds for 
$$\e < \min \{ |\nabla K(p)|;\ p \in \D, |p| \in [r^*, 1]\}.$$	

We take $s \in (0, s(r^*))$ as given in Proposition \ref{s(r)}. We write as in Proposition \ref{s(r)},
	
	$$ Z_s = \{ u_a: a \in \D, \ |a|<r^*\}.$$
	
	We also fix $\delta>0$ and define:
	
	$$ Z_s^\delta =\{u \in X: u = u_a + \tilde{u},\ |a| < r^*, \ \|\tilde{u}\| < \delta, \ \tilde{u} \in \ T_{u_a}M_a  \}.$$
	
	Taking into account Proposition \ref{s=0}, iv), $Z_s^\delta$ is a tubular neighborhood of $Z_s$.
	
\medskip We denote by $\pi_a$ and $\pi_a^{\perp}$ the orthogonal projections onto $T_{u_a}M_a$ and  $\left(T_{u_a}M_a \right)^{\perp}$, respectively. Let us recall that

\begin{equation} \label{tangente} T_{u_a}M_a= \{ v \in X: \ \int_{\D} e^{u_a} v (x-a)=0\}. \end{equation}

%

\end{definition}

\begin{proof}[Proof of Theorem \ref{main2}]

Thanks to Proposition \ref{s(r)}, d), the excision property of the degree allows us to study:

$$ deg_{LS}(J_s', Z_s^\delta, 0).$$

By Proposition \ref{s(r)}, b), we have that

  \begin{equation} \label{def+} J_s''(u_a) \mbox{ is strictly positive definite in }T_{u_a}M_a.\end{equation}
As a consequence, for a sufficiently small $\delta>0$,

$$ \langle J_s'(u_a+\tilde{u}) , \tilde{u} \rangle >0 \mbox{ if } \tilde{u} \in T_{u_a}M_a, \ \| \tilde{u}\| \in (0, \delta).$$
Then, the operator: 
$$ D: B(0, \delta) \subset T_{u_a}M_a \to T_{u_a}M_a,$$

$$D(\tilde{u}) = \pi_a \left( J_s'(u_a+ \tilde{u}) \right),$$
is homotopic to the identity. Hence we can use the homotopy property of the degree, to conclude that:

$$ deg_{LS}(J_s', Z_s^\delta, 0)=deg_{LS}(F_1, Z_s^\delta, 0),$$
where $F_1: Z_s^\delta \to X$ is defined as

$$ F_1(u_a + \tilde{u}) = \pi_a^{\perp} \left( J_s'(u_a+ \tilde{u})\right) + \tilde{u}.$$

Define now $H(t, u_a + \tilde{u}) = t F_1(u_a + \tilde{u}) + (1-t) F_2(u_a + \tilde{u})$, where $F_2: Z_s^\delta \to X$ is defined as

$$ F_2(u_a + \tilde{u}) = J_s'(u_a) + \tilde{u}.$$

We show that $H$ is a valid homotopy. Indeed, 

$$0= H(t, u_a + \tilde{u})= t \pi_a^{\perp} \left( J_s'(u_a+ \tilde{u})\right) + (1-t) J_s'(u_a) + \tilde{u}.$$

Observe now that the two first terms belong to $T_{u_a}M_a$ and $\tilde{u} \in (T_{u_a} M_a)^{\perp}$, hence we have:

$$ t \pi_a^{\perp} \left( J_s'(u_a+ \tilde{u})\right) + (1-t) J_s'(u_a)=0, \  \tilde{u}=0.$$

But if $\tilde{u} =0$ then $J_s'(u_a)=0$, and this does not occur in $\partial Z_s^\delta$ by Proposition \ref{chiave}. By the homotopy invariance of the Leray-Schauder degree we have that:

$$ deg_{LS}(F_1, Z_s^\delta, 0)=deg_{LS}(F_2, Z_s^\delta, 0).$$

In order to compute the Leray-Schauder degree of $F_2$, we will use a convenient homeomorphism. We make the abuse of notation $x_i \in X$, $x_i(x)=x_i$, $i=1$, $2$, and $x=(x_1, x_2)$. We write $E= span \{x_1, \ x_2\} \subset X$, and 

$$D(r^*)= \{a \cdot x,\ a=(a_1, a_2) \in \R^2, \ \| a \| <r^*\} \subset E, $$$$ \ B_\e=B(0, \e) \subset E^{\perp} \subset X,$$ for sufficiently small $\e>0$. We claim that for any $a \in D(r^*)$, $E$ and $T_{u_a}M_a$ are transversal, that is,

\begin{equation} \label{claimfinal} E \cap T_{u_a}M_a = \{0\}= E^{\perp} \cap (T_{u_a}M_a)^{\perp}.\end{equation} 

In order to prove this, observe that since $P(u_a)=a$, we have that:

$$ \int_{\D} a_1 (x_1-a_1) e^{u_a} = a_1 \int_{\D} (x_1-a_1) e^{u_a} =0.$$

Then

\begin{equation} \label{positivo} \int_{\D} x_1 (x_1-a_1) e^{u_a} =  \int_{\D} (x_1-a_1)^2 e^{u_a} >0,\end{equation}

which, recalling \eqref{tangente}, implies that $x_1 \notin T_{u_a}M_a$. In this way we conclude that $E \cap T_{u_a}M_a = \{0\}$. The second assertion follows from the fact that $\dim \, E= \dim \, (T_{u_a}M_a)^{\perp} =2$.

\medskip Let us define:

$$ \Gamma: D(r^*) \oplus B_\e \to Z_s^{\delta},$$

$$ \Gamma((a \cdot x) + y)= u_a + \pi_a(y).$$

Observe that by \eqref{claimfinal}, $\pi_a(y)=0 $ only if $y=0$. As a consequence, $\pi_a: B_\e \to T_{u_a}M_a$ is a bijection. We also point out that $\Gamma$ is a compact perturbation of the identity map.

%

Moreover, $\Gamma$ is an homeomorphism onto its image, which is included in $Z_s^{\delta}$ for $\e$ sufficiently small. Recall that the zeroes of $F_2$ are contained in $Z_s$, so we can use the excision property of the degree to restrict ourselves to the image of $\Gamma$. 

By the invariance of the degree via homeomorphisms (see \cite[Corollary 2.5 of Chapter 5]{Rothe}) we have that:

$$ deg_{LS}(F_2, Z_s^\delta, 0) = \pm deg_{LS}(F_2\circ \Gamma, D(r^*) \oplus B_\e, 0),$$
where
$$ F_2\circ \Gamma ((a \cdot x) + y) = J_s'(u_a) + \pi_a(y).$$

Let us define $ F_3, F_4 : D(r^*) \oplus B_\e \to X$,

$$ F_3 ((a \cdot x) + y) =  J_s'(u_a)  + y.$$
$$ F_4 ((a \cdot x) + y) = \pi_{E}\big ( J_s'(u_a) \big ) + y.$$

Now we define the homotopies $$ \bar{H}(t, w)= t (F_2 \circ \Gamma)(w) + (1-t) F_3(w),$$ $$\tilde{H}(t,w)= t F_3(w) + (1-t)F_4(w).$$ 

Let us check that $\bar{H}$ is a valid homotopy. Assume that for some $t \in [0,1]$, $w = ((a \cdot x) + y) \in D(r^*) \oplus B_\e$, we have $\bar{H}(t,w)=0$, that is,

$$ J_s'(u_a) + t \pi_a (y) + (1-t) y=0.$$ 

If we make a scalar product with $\pi_a(y)$ we obtain that:

$$ t \| \pi_a(y) \|^2 + (1-t) \langle y, \pi_a(y) \rangle=0.$$
Taking into account that $\langle y, \pi_a(y) \rangle = \| \pi_a(y) \|^2$, we conclude that $\pi_a(y)=0$. By \eqref{claimfinal}, we conclude that $y=0$. This would imply that $J_s'(u_a)=0$, but this does not happen in $\partial D(r^*)$, as we know by Proposition \ref{chiave}.

Similar computations imply that $\tilde{H}$ is also a valid homotopy, by multiplying this time  by $\pi_E(J_s'(u_a))$.

%

%
%
%
%
%
%

Now observe that by \cite[Theorem 8.7]{deimling}, we have that:

$$ deg_{LS}(F_4, D(r^*) \oplus B_\e, 0) = d_B(\Upsilon, D(r^*),0),$$
where $\Upsilon: D(r^*) \to E$, $\Upsilon(a \cdot x) = \pi_E(J_s'(u_a)).$ Recall that

$$ J_s'(u_a)(v)= \frac{1}{\int_{\D} e^{u_a} }\int_{\D} e^{u_a} v \ \tilde{\mu}_{s}(a) \cdot (x-a),$$
and that $\tilde{\mu}_s $ is related to $\mu_s$ via equation \eqref{muu}. 

To conclude the proof of Theorem \ref{main2}, it suffices to prove that:

$$d_B(\Upsilon, D(r^*),0) = d_B (\mu_s, D(r^*), 0).$$ 

The rest of the proof is devoted to verify the above identity. Let us fix $a \in D(r^*)$ and use a coordinate system so that $a=(a_1,0)$. 
If we denote by $\rho = J_s'(u_a)$ via the identification between $X$ and $X'$,
we have that $\rho= \rho_1 + \rho_2$, where $\rho_i$ are weak solutions to the problem:

$$ - \Delta \rho_i = \frac{1}{\int_{\D} e^{u_a} } \tilde{\mu}_{i,s}(a) e^{u_a} (x_i-a_i).$$

Observe now that,
$$ \pi_E(\rho) = \frac{1}{\pi} \langle \rho, x_1 \rangle x_1 + \frac{1}{\pi} \langle \rho, x_2 \rangle x_2,$$
and, taking into account \eqref{positivo},
$$ \frac{1}{ \pi } \langle \rho_i, x_i \rangle = \frac{1}{ \pi } \frac{1}{\int_{\D} e^{u_a} } \tilde{\mu}_{i,s}(a) \int_{\D} e^{u_a} (x_i-a_i) x_i $$ 
$$ = \tilde{\mu}_{i, s}(a) \, q_{i,s}(a)>0, $$ 
where $q_{i,s}(a)> q>0$ for all $|a|\leq r^*$, $s \in (0, s(r^*))$. Observe moreover that:

$$ \frac{1}{ \pi } \langle \rho_i, x_j \rangle = \frac{1}{ \pi } \frac{1}{\int_{\D} e^{u_a} } \tilde{\mu}_{i,s}(a) \int_{\D} e^{u_a} (x_1-a_1) x_2,$$
and 

$$ \int_{\D} e^{u_a} (x_1-a_1) x_2=o(s),$$ 
since, by symmetry,

$$ \int_{\D} e^{\varphi_a} (x_1-a_1) x_2=0,$$
where $\varphi_a$ is defined in \eqref{ua}. 

Then,
$$ \Upsilon (a \cdot x) = q_1(a) \tilde{\mu}_{1,s}(a) x_1 +  q_2(a) \tilde{\mu}_{2,s}(a) x_2 + o(s) (|\tilde{\mu}_s|).$$  
Taking smaller $s$ if necessary, to conclude that
$$ d_B(\Upsilon, D(r^*),0) = d_B(\bar{\Upsilon}, D(r^*),0),$$
where

$$ \bar{\Upsilon} (a \cdot x) = q_1(a) \tilde{\mu}_{1,s}(a) x_1 +  q_2(a) \tilde{\mu}_{2,s}(a) x_2.$$   

We now use a homotopy passing from $q_i(a)$ to $1$ to conclude that:

$$d_B(\bar{\Upsilon}, D(r^*),0) = d_B (\tilde{\mu}_s, D(r^*), 0)= d_B (\mu_s, D(r^*), 0).$$ 

Hence the proof is concluded by taking into account Proposition \ref{chiave}.

\end{proof}

\bibliographystyle{unsrt}

\end{document}